\newtheorem{theorem}{Theorem}[section]
\newtheorem{proposition}[theorem]{Proposition}
\newtheorem{corollary}[theorem]{Corollary}
\theoremstyle{definition}
\newtheorem{definition}[theorem]{Definition}
\theoremstyle{remark}
\newtheorem{remark}[theorem]{Remark}
\newcommand{\be}{\begin{equation}}
\newcommand{\ee}{\end{equation}}
\newcommand{\g}{\gamma}
\newcommand{\om}{\omega}
\newcommand{\weyl}{{\stackrel{\scriptscriptstyle{LC}}{W}}\phantom{}}
\newcommand{\schou}{{\stackrel{\scriptscriptstyle{LC}}{\Rho}}}
\newcommand{\rlc}{{\stackrel{\scriptscriptstyle{LC}}{R}}}
\newcommand{\oms}{{\stackrel{\scriptscriptstyle{LC}}{\omega}}}
\newcommand{\D}{{\rm D}}
\newcommand{\dz}{\wedge}
\newcommand{\ba}{\begin{array}}
\newcommand{\ea}{\end{array}}
\newcommand{\beq}{\begin{eqnarray}}
\newcommand{\eeq}{\end{eqnarray}}
\newtheorem{lm}{lemma}
\newtheorem{thee}{theorem}
\newtheorem{proo}{proposition}
\newtheorem{co}{corollary}
\newtheorem{rem}{remark}
\newtheorem{deff}{definition}
\newcommand{\bd}{\begin{deff}}
\newcommand{\ed}{\end{deff}}
\newcommand{\bl}{\begin{lm}}
\newcommand{\el}{\end{lm}}
\newcommand{\bp}{\begin{proo}}
\newcommand{\ep}{\end{proo}}
\newcommand{\bt}{\begin{thee}}
\newcommand{\et}{\end{thee}}
\newcommand{\bc}{\begin{co}}
\newcommand{\ec}{\end{co}}
\newcommand{\brm}{\begin{rem}}
\newcommand{\erm}{\end{rem}}
\newcommand{\der}{{\rm d}}
\def\frak{\mathfrak}
\newcommand{\newc}{\newcommand}
\let\ccdot\cdot
\def\cdot{\hbox to 2.5pt{\hss$\ccdot$\hss}}
\newc{\aR}{\mbox{\boldmath{$ R$}}}
\newc{\aS}{\mbox{\boldmath{$ S$}}}
\newc{\aT}{\mbox{\boldmath{$ T$}}}
\newc{\aW}{\mbox{\boldmath{$ W$}}}
\newc{\aK}{\mbox{\boldmath{$ K$}}}
\newc{\aL}{\mbox{\boldmath{$ L$}}}
\newcommand{\Rho}{{\mbox{\sf P}}}
\newcommand{\hook}{\raisebox{-0.35ex}{\makebox[0.6em][r]
{\scriptsize $-$}}\hspace{-0.15em}\raisebox{0.25ex}{\makebox[0.4em][l]{\tiny
 $|$}}}
\newcommand{\bma}{\begin{pmatrix}}
\newcommand{\ema}{\end{pmatrix}}
\newcommand{\X}{\mbox{\boldmath{$ X$}}}
\newc{\obstrn}[2]{B^{#1}_{#2}}
\newcommand{\rpl}                         
{\mbox{$
\begin{picture}(12.7,8)(-.5,-1)
\put(0,0.2){$+$}
\put(4.2,2.8){\oval(8,8)[r]}
\end{picture}$}}
\newcommand{\lpl}                         
{\mbox{$
\begin{picture}(12.7,8)(-.5,-1)
\put(2,0.2){$+$}
\put(6.2,2.8){\oval(8,8)[l]}
\end{picture}$}}
\newc{\tensor}[1]{#1}
\newc{\Mvariable}[1]{\mbox{#1}}
\newc{\down}[1]{{}_{#1}}
\newc{\up}[1]{{}^{#1}}
\newc{\JulyStrut}{\rule{0mm}{6mm}}
\newc{\midtenPan}{\mbox{\sf S}}
\newc{\midten}{\mbox{\sf T}}
\newc{\midtenEi}{\mbox{\sf U}}
\newc{\ATen}{\mbox{\sf E}}
\newc{\BTen}{\mbox{\sf F}}
\newc{\CTen}{\mbox{\sf G}}
\def\sideremark#1{\ifvmode\leavevmode\fi\vadjust{\vbox to0pt{\vss
 \hbox to 0pt{\hskip\hsize\hskip1em
 \vbox{\hsize3cm\tiny\raggedright\pretolerance10000
 \noindent #1\hfill}\hss}\vbox to8pt{\vfil}\vss}}}%
\newcommand{\Span}{\mathrm{Span}}
\newcounter{romenumi}
\newcommand{\labelromenumi}{(\roman{romenumi})}
\begin{document}
\title{Projective vs metric structures}
\vskip 1.truecm
\author{Pawe\l~ Nurowski} \address{Instytut Fizyki Teoretycznej,
Uniwersytet Warszawski, ul. Hoza 69, Warszawa, Poland}
\email{nurowski@fuw.edu.pl} 

\date{\today}

\begin{abstract}
We present a number of conditions which are necessary 
for an $n$-dimensional projective structure $(M,[\nabla])$ 
to include the Levi-Civita connection $\nabla$ of some metric on $M$. We provide
an algorithm, which effectively checks if a Levi-Civita
connection is in the projective class and, in the positive,
which finds this connection and the metric. 
The article also provides a basic information on invariants of
projective structures, including the treatment via Cartan's normal
projective connection. In particular we show that there is a number
of Fefferman-like conformal structures, defined on a subbundle of the Cartan
bundle of the projective structure, which encode the projectively
invariant information about $(M,[\nabla])$.  

\end{abstract}
\maketitle
\tableofcontents
\newcommand{\bbS}{\mathbb{S}}
\newcommand{\bbR}{\mathbb{R}}
\newcommand{\sog}{\mathbf{SO}}
\newcommand{\slg}{\mathbf{SL}}
\newcommand{\glg}{\mathbf{GL}}
\newcommand{\og}{\mathbf{O}}
\newcommand{\soa}{\frak{so}}
\newcommand{\sla}{\frak{sl}}
\newcommand{\sua}{\frak{su}}
\newcommand{\dr}{\mathrm{d}}
\newcommand{\sug}{\mathbf{SU}}
\newcommand{\gat}{\tilde{\gamma}}
\newcommand{\Gat}{\tilde{\Gamma}}
\newcommand{\thet}{\tilde{\theta}}
\newcommand{\Thet}{\tilde{T}}
\newcommand{\rt}{\tilde{r}}
\newcommand{\st}{\sqrt{3}}
\newcommand{\kat}{\tilde{\kappa}}
\newcommand{\kz}{{K^{{~}^{\hskip-3.1mm\circ}}}}
\newcommand{\bv}{{\bf v}}
\newcommand{\di}{{\rm div}}
\newcommand{\curl}{{\rm curl}}
\newcommand{\cs}{(M,{\rm T}^{1,0})}
\newcommand{\tn}{{\mathcal N}}

\section{Projective structures and their invariants}
\subsection{Definition of a projective structure}
A \emph{projective structure} on an $n$-dimensional manifold $M$ is an
equivalence class of \emph{torsionless} connections $[\nabla]$ with an
equivalence relation identifying every two connections $\hat{\nabla}$ and 
$\nabla$ for which 
\be
\hat{\nabla}_XY=\nabla_XY+A(X)Y+A(Y)X,\quad\quad\quad\forall X,Y\in{\rm T}M,\label{prst}\ee
with some 1-form $A$ on $M$. 

Two connections from a projective class have \emph{the same 
unparametrized geodesics} in $M$, and the converse is also true: two
torsionless connections have the same unparametrized geodesics in $M$
if they belong to the same projective class.

The main pourpose of this article is to answer the following question:

`When a given projective class of connections $[\nabla]$ on $M$ includes a
Levi-Civita connection of some metric $g$ on $M$?'

This problem has a long history, see e.g. \cite{liu,mik,sin}. It was
recently solved in $\dim M=2$ in a beatiful paper \cite{bde}, which
also, in its last section, indicates how to treat the problem in $\dim
M\geq 3$. In the present paper we follow \cite{bde} and treat the
problem in full generality\footnote{I have been recently informed by M Dunajski that the problem is also
being considered by him and S Casey \cite{cas}.} in $\dim M\geq 3$. On doing this we need
the \emph{invariants} of projective structures.

The system of local invariants for projective structures was
constructed by Cartan \cite{car} (see also \cite{tom}). We briefely present it here for the
completness (see e.g. \cite{ea,kob,nn} for more details).

For our pourposes it is convenient to describe a connection $\nabla$
in terms of the connection coefficients $\Gamma^i_{~jk}$ associated
with any frame $(X_a)$ on $M$. This is possible via the formula: 
$$\nabla_{a}X_b=\Gamma^c_{~ba}X_c,\quad\quad\quad \nabla_a:=\nabla_{X_a}.$$ Given a frame $(X_a)$ these
relations provide a one-to-one correspondence between connections
$\nabla$ and the connection coefficients $\Gamma^a_{~bc}$. In particular,
a connection is torsionless iff 
$$\Gamma^c_{~ab}-\Gamma^c_{~ba}=-\theta^c([X_a,X_b]),$$ where
$(\theta^a)$ is a coframe dual to $(X_a)$,
$$\theta^b(X_a)=\delta^b_{~a}.$$ 
Moreover, two connections $\hat{\nabla}$ and
$\nabla$ are in the same projective class iff there exists a coframe in which 
$$\hat{\Gamma}^c_{~ab}=\Gamma^c_{~ab}+\delta^c_{~a}A_b+\delta^c_{~b}A_a,$$
for some 1-form $A=A_a\theta^a$. 

In the following, rather than using the connection coefficients, we
will use a collective object
$$\Gamma^a_{~b}=\Gamma^a_{~bc}\theta^c,$$
which we call connection 1-forms. In terms of them the projective
equivalence reads:
\be
\hat{\Gamma}^a_{~b}=\Gamma^a_{~b}+\delta^a_{~b}A+A_b\theta^a.\label{pt}\ee

\subsection{Projective Weyl, Schouten and Cotton tensors}
Now, given a projective structure $[\nabla]$ on $M$, we take
a connection 1-forms $(\Gamma^i_{~j})$ of a particular representative
$\nabla$. Because of no torsion we have:
\be\der\theta^a+\Gamma^a_{~b}\dz\theta^b=0.\label{c11}\ee 
The curvature of this connection
\be\Omega^a_{~b}=\der\Gamma^a_{~b}+\Gamma^a_{~c}\dz\Gamma^c_{~b},\label{c12}\ee
which defines the curvature tensor $R^a_{~bcd}$ via:
$$\Omega^a_{~b}=\tfrac12 R^a_{~bcd}\theta^c\dz\theta^d,$$
is now decomposed onto the irreducible components with respect to the
action of $\glg(n,\bbR)$ group:
\be
\Omega^a_{~b}=W^a_{~b}+\theta^a\dz\omega_b+\delta^a_{~b}\theta^c\dz\omega_c.\label{c13}\ee
Here $W^a_{~b}$ is endomorphims-valued 2-form:
$$W^a_{~b}=\tfrac12W^a_{~bcd}\theta^c\dz\theta^d,$$
which is totally traceless:
$$W^a_{~a}=0,\quad\quad W^a_{~bac}=0,$$
and has all the symmetries of $R^a_{~bcd}$. 
Quantity $\omega_a$ is a covector-valued 1-form. It defines a
tensor $\Rho_{ab}$ via 
\be
\omega_b=\theta^a\Rho_{ab}.\label{d1}\ee
The tensors $W^a_{~bcd}$ and $P_{ab}$ are called the Weyl tensor, and
the Schouten tensor, respectively. They are realted to the curvature
tensor $R^a_{~bcd}$ via:
$$R^a_{~bcd}=W^a_{~bcd}+\delta^a_{~c}\Rho_{db}-\delta^a_{~d}\Rho_{cb}-
2\delta^a_{~b}\Rho_{[cd]}.$$
In particular, we have also the relation between the Schouten tensor
$\Rho_{ab}$ and the Ricci tensor
$$R_{ab}=R^c_{~acb},$$ 
which reads:
\be
\Rho_{ab}=\tfrac{1}{n-1}R_{(ab)}-\tfrac{1}{n+1}R_{[ab]}.\label{c41}\ee
One also introduces the Cotton tensor $Y_{bca}$, which is defined via 
the covector valued 2-form 
\be Y_a=\tfrac12Y_{bca}\theta^b\dz\theta^c,\label{c14}\ee
by 
\be Y_a=\der\omega_a+\omega_b\dz\Gamma^b_{~a}.\label{c15}\ee
Note that $Y_{bca}$ is antisymmetric in $\{bc\}$.

Now, combining the equations (\ref{c11}), (\ref{c12}), (\ref{c13}),
(\ref{c14}) and (\ref{c15}), we get the \emph{Cartan structure
  equations}:
\be\begin{aligned}
&\der\theta^a+\Gamma^a_{~b}\dz\theta^b=0\\
&\der\Gamma^a_{~b}+\Gamma^a_{~c}\dz\Gamma^c_{~b}=
W^a_{~b}+\theta^a\dz\omega_b+\delta^a_{~b}\theta^c\dz\omega_c\\
&\der\omega_a+\omega_b\dz\Gamma^b_{~a}=Y_a.
\end{aligned}\label{cs}\ee 

It is convenient to introduce the \emph{covariant exterior
  differential} $\D$, which on tensor-valued $k$-forms acts as:
$$\D K^{a_1\dots a_r}_{\quad\quad\,\,\,b_1\dots b_s}=\der K^{a_1\dots a_r}_{\quad\quad\,\,\,b_1\dots
  b_s}+
\Gamma^{a_i}_{\,\,\,\,a}\dz K^{a_1\dots a\dots a_r}_{\quad\quad\quad\,\,\,\, b_1\dots
  b_s}-\Gamma^b_{~b_i}\dz K^{a_1\dots a_r}_{\quad\quad\,\,\,b_1\dots
  b\dots b_s}.$$
This, in particular satisfies the Ricci identity:
\be
\D^2K^{a_1\dots a_r}_{\quad\quad\,\,\,b_1\dots b_s}=\Omega^{a_i}_{\,\,\,\,a}\dz K^{a_1\dots a\dots a_r}_{\quad\quad\quad\,\,\,\, b_1\dots
  b_s}-\Omega^b_{~b_i}\dz K^{a_1\dots a_r}_{\quad\quad\,\,\,b_1\dots
  b\dots b_s}.\label{ri}\ee
This identity will be crucial in the rest of the paper.

Using $\D$ we can write the first and the third Cartan structure
equation in respective compact forms:
\be\begin{aligned}
&\D\theta^a=0,\\
&\D\omega_a=Y_a.
\end{aligned}\label{csm}
\ee
Noting that on tensor-valued 0-forms we have: 
$$\D K^{a_1\dots a_r}_{\quad\quad\,\,\,b_1\dots b_s}=
\theta^c\nabla_cK^{a_1\dots a_r}_{\quad\quad\,\,\,b_1\dots b_s},$$
and comparing with the definition (\ref{d1}) one sees that the second equation
(\ref{csm}) is equivalent to:
\be
Y_{bca}=2\nabla_{[b}\Rho_{c]a}.\label{dd1}\ee
\subsection{Bianchi identities}
We now apply $\D$ on the both sides of the Cartan structure equations
(\ref{cs}) and use the Ricci formula (\ref{ri}) to obtain the Bianchi 
identities.

Applying $\D$ on the first of (\ref{cs}) we get
$$0=\D^2\theta^a=\Omega^a_{~b}\dz\theta^b,$$ i.e. tensorially:
$$R^a_{~[bcd]}=0.$$
This, because the Weyl tensor has the same symmetries as $R^a_{~bcd}$, means also
that
\be
W^a_{~[bcd]}=0.\label{bi1}\ee

Next, applying $\D$ on the second of (\ref{cs}) we get:
$$\D W^a_{~b}=\theta^a\dz Y_b+\delta^a_{~b}\theta^c\dz Y_c.$$
This, when written in terms of the tensors $W^a_{~bcd}$ and $Y_{abc}$,
reads:
\be
\begin{aligned}
&\nabla_a W^d_{~ebc}+\nabla_c W^d_{~eab}+\nabla_b W^d_{~eca}=\\
&\quad\quad\delta^d_{~a}Y_{bce}+\delta^d_{~c}Y_{abe}+\delta^d_{~b}Y_{cae}+
\delta^d_{~e}(Y_{abc}+Y_{cab}+Y_{bca}).
\end{aligned}\label{13}
\ee
This, when contracted in $\{ad\}$, and compared with (\ref{bi1}),
implies in particular that:
\be\nabla_dW^d_{~abc}=(n-2)Y_{bca}\label{bi2}\ee
and
\be
Y_{[abc]}=0.\label{bi3}\ee
Thus when $n>2$ the Cotton tensor is determined by the divergence of
the Weyl tensor. 

It is also worthwhile to note, that because of (\ref{bi3}) the
identity (\ref{13}) simplifies to:
\be
\begin{aligned}
\nabla_a W^d_{~ebc}+\nabla_c W^d_{~eab}+\nabla_b W^d_{~eca}=\delta^d_{~a}Y_{bce}+\delta^d_{~c}Y_{abe}+\delta^d_{~b}Y_{cae}.
\end{aligned}
\ee

Another immediate but useful consequence of the
identity (\ref{bi3}) is 
\be
\nabla_{[a}\Rho_{bc]}=0.\label{bi76}\ee
This fact suggests an introduction of a 2-form 
$$\beta=\tfrac12\Rho_{[ab]}\theta^a\dz\theta^b.$$
Since $\beta$ is a \emph{scalar} 2-form we have:
$$\begin{aligned}
&\der\beta=\D\beta=\D(\tfrac12\Rho_{[ab]}\theta^a\dz\theta^b)=\\
&\tfrac12(\D\Rho_{[ab]})\theta^a\dz\theta^b=\\
&\tfrac12(\nabla_c\Rho_{[ab]})\theta^c\dz\theta^a\dz\theta^b=\\
&\tfrac12(\nabla_{[c}\Rho_{ab]})\theta^c\dz\theta^a\dz\theta^b=0.\end{aligned}$$
Thus, due to the Bianchi identity (\ref{bi76}) and the first structure
equation (\ref{csm}), the 2-form $\beta$ is
\emph{closed}. 

Finally, applying $\D$ on the last Cartan equation (\ref{cs}) we get
$$\D Y_a+\omega_b\dz W^b_{~a}=0.$$
This relates 1st derivatives of the Cotton tensor to a bilinear
combination of the Weyl and the 
Schouten tensors:
\be\begin{aligned}
&\nabla_aY_{bcd}+\nabla_cY_{abd}+\nabla_bY_{cad}=\\
&\quad\quad\quad\Rho_{ae}W^e_{~dcb}+\Rho_{be}W^e_{~dac}+\Rho_{ce}W^e_{~dba}.
\end{aligned}\ee
\subsection{Gauge transformations} It is a matter of checking that if
we take another connection $\hat{\nabla}$ from the projective class
$[\nabla]$, i.e. if we start with connection 1-forms
$\hat{\Gamma}^i_{~j}$ related to $\Gamma^i_{~j}$ via 
$$\hat{\Gamma}^a_{~b}=\Gamma^a_{~b}+\delta^a_{~b}A+A_b\theta^a,$$
then the basic objects $\omega_a$, $W^a_{~b}$ and $Y_a$ transform as:
\be\begin{aligned}
&\hat{\omega}_a=\omega_a-\D A_a+A A_a\\
&\hat\beta=\beta-\der A\\
&\hat{W}^a_{~b}=W^a_{~b}\\
&\hat{Y}_a=Y_a+A_bW^b_{~a}.
\end{aligned}\label{1tr}
\ee
This, in the language of 0-forms means:
\be\begin{aligned}
&\hat{\Gamma}^a_{~bc}=\Gamma^a_{~bc}+\delta^a_{~c}A_b+\delta^a_{~b}A_c\\
&\hat{P}_{ab}=P_{ab}-\nabla_aA_b+A_aA_b\\
&\hat{P}_{[ab]}=P_{[ab]}-\nabla_{[a}A_{b]}\\
&\hat{W}^a_{~bcd}=W^a_{~bcd}\\
&\hat{Y}_{abc}=Y_{abc}+A_dW^d_{~cab}.
\end{aligned}\label{2tr}
\ee
This in particular means that \emph{the Weyl tensor is a projectively
  invariant object}. We also note that the 2-form $\beta$ \emph{transforms
modulo addition of a total differential}.
\begin{corollary}
Locally in every projective class $[\nabla]$ there exists a
torsionless connection $\nabla^0$ for which the Schouten tensor is
symmetric, $\Rho_{ab}=\Rho_{(ab)}$. 
\end{corollary}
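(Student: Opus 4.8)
The strategy is to realize $\nabla^0$ as a projective gauge transform of an arbitrary representative $\nabla$ of $[\nabla]$, reading off the required $1$-form from the $2$-form $\beta=\tfrac12\Rho_{[ab]}\theta^a\dz\theta^b$ introduced above. The key input is the transformation law $\hat\beta=\beta-\der A$ from \nn{1tr}: the antisymmetric part $\hat\Rho_{[ab]}$ of the Schouten tensor of the representative $\hat\nabla$ determined by $A$ is exactly what is encoded by $\hat\beta$, so requiring $\Rho_{ab}$ to be symmetric is equivalent to requiring $\hat\beta=0$, i.e. to solving $\der A=\beta$.

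First I would invoke the computation carried out just before the statement: from the Bianchi identity \nn{bi76}, $\nabla_{[a}\Rho_{bc]}=0$, together with the first structure equation $\D\theta^a=0$, one gets $\der\beta=0$, so $\beta$ is a \emph{closed} $2$-form. Then, by the Poincar\'e lemma, every point of $M$ has a neighbourhood $U$ on which $\beta=\der A$ for some $1$-form $A$ on $U$. Let $\nabla^0$ be the connection on $U$ obtained from $\nabla|_U$ by the projective change \nn{pt} with this $A$. Its associated $2$-form is then $\hat\beta=\beta-\der A=0$; since $\hat\beta=\tfrac12\hat\Rho_{[ab]}\theta^a\dz\theta^b$ and $(\theta^a)$ is a coframe, $\hat\beta=0$ forces $\hat\Rho_{[ab]}=0$, i.e. the Schouten tensor of $\nabla^0$ is symmetric, as required.

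I do not anticipate any genuine obstacle here; the only point worth stressing is that the statement is inherently local, because the Poincar\'e lemma yields $A$ only on a contractible neighbourhood — a global choice of such $\nabla^0$ would require the de Rham class $[\beta]\in H^2(M,\mathbb{R})$ to vanish. One may also remark that $\nabla^0$ is determined only up to the residual freedom $A\mapsto A+\der f$ with $f\in C^\infty(U)$, which leaves $\der A$, and hence the symmetry of $\Rho_{ab}$, untouched.
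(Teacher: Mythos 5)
Your proof is correct and follows essentially the same route as the paper: closedness of $\beta$ via the Bianchi identity \nn{bi76}, the Poincar\'e lemma to write $\beta=\der A$ locally, and the transformation law $\hat\beta=\beta-\der A$ from \nn{1tr} to conclude $\hat{\Rho}_{[ab]}=0$. Your added remarks on the global obstruction $[\beta]\in H^2(M,\mathbb{R})$ and the residual freedom $A\mapsto A+\der f$ are consistent with the paper's Remark \ref{ko}.
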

\begin{proof}
We know that due to the Bainchi identities (\ref{bi76}) the 2-form
$\beta$ encoding the antisymmetric part of $\Rho_{ab}$ is closed,
$\der\beta=0$. Thus, using the Poincare lemma, we know that 
there exists a 1-form $\Upsilon$ such that locally
$\beta=\der\Upsilon$. It is therefore sufficient to take $A=\Upsilon$
and
$\hat{\Gamma}^a_{~b}=\Gamma^a_{~b}+\delta^a_{~b}\Upsilon+\theta^a\Upsilon_b$,
to get $\hat{\beta}=0$, by the second relation in (\ref{1tr}). This
proves that in the connection $\hat{\Gamma}^a_{~b}$ projectively
equivalent to $\Gamma^a_{~b}$, we have $\hat{\Rho}_{[ab]}=0$. 
\end{proof}
\begin{remark}\label{ko}
Note that if $\Gamma^{a}_{~b}$ is a connection for which $\Rho_{ab}$
is symmetric then it is also symmetric in any projectively equivalent
connection for which $A=\der\phi$, where $\phi$ is a function.
\end{remark}
\begin{definition}
A subclass of projectively equivalent connections for which the
Schouten tensor is \emph{symmetric} is called \emph{special}
projective class.
\end{definition}
Mutatis mutandis we have: 
\begin{corollary}
Locally every projective class contains a \emph{special}
projective subclass. This subclass is given modulo transformations
(\ref{pt}) with $A$ being a gradient, $A=\der\phi$.
\end{corollary}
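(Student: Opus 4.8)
The plan is to obtain the statement almost for free from the preceding Corollary (which yields a representative with symmetric Schouten tensor), the transformation rule $\hat\beta=\beta-\der A$ from (\ref{1tr}), and the Poincar\'e lemma (which pins down the residual gauge freedom).

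\emph{Existence.} By the preceding Corollary, locally in $[\nabla]$ there is a torsionless representative $\nabla^0$ for which $\Rho_{ab}=\Rho_{(ab)}$. By the definition of a special projective class, the set of connections in $[\nabla]$ that are projectively equivalent to $\nabla^0$ and have symmetric Schouten tensor is a special projective subclass, and it is nonempty since it contains $\nabla^0$; this establishes the first sentence.

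\emph{Identification of the subclass.} First I would recall that $\beta=\tfrac12\Rho_{[ab]}\theta^a\dz\theta^b$ encodes precisely the antisymmetric part of the Schouten tensor, so that a representative has $\hat{\Rho}_{[ab]}=0$ if and only if its $\hat\beta=0$. Now start from $\nabla^0$, whose $\beta$ vanishes, and let $\hat\nabla$ be any connection of $[\nabla]$, related to $\nabla^0$ by a $1$-form $A$ as in (\ref{pt}); by the second line of (\ref{1tr}) we have $\hat\beta=\beta-\der A=-\der A$. Hence $\hat\nabla$ belongs to the special subclass iff $\der A=0$, i.e. $A$ is closed, which by the Poincar\'e lemma is equivalent, locally, to $A=\der\phi$ for some function $\phi$. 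The converse inclusion — that every gradient transformation keeps us inside the subclass — is exactly Remark~\ref{ko} (and is also visible directly, since then $\hat\beta=\beta-\der\der\phi=\beta=0$). Thus, locally, the special subclass is precisely the orbit of $\nabla^0$ under the transformations (\ref{pt}) with $A=\der\phi$, which is the second assertion; incidentally this also shows that the subclass is locally unique.

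\emph{Main point of care.} There is no genuine obstacle once the Bianchi identity (\ref{bi76}), i.e. closedness of $\beta$, is in hand. The only things to watch are that the whole statement is inherently \emph{local} — the Poincar\'e lemma is invoked twice, once inside the preceding Corollary to manufacture $\nabla^0$ and once here to pass from closedness of $A$ to $A=\der\phi$ — and that ``modulo'' is to be read as allowing $\phi$ to be altered by a (locally constant) additive function, which does not change the resulting connection.
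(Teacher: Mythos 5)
Your argument is correct and is essentially the paper's own: the paper derives this corollary ``mutatis mutandis'' from the preceding one together with Remark~\ref{ko}, i.e.\ exactly from the closedness of $\beta$, the rule $\hat\beta=\beta-\der A$ in (\ref{1tr}), and the Poincar\'e lemma, which is what you have spelled out. Your version merely makes explicit the characterization of the residual gauge freedom ($\der A=0\Leftrightarrow A=\der\phi$ locally), which the paper leaves implicit.
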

\begin{corollary}\label{kor}
The curvature $\Omega^a_{~b}$ of any connection from a special
projective subclass of projective connections $[\nabla]$ is tarceless, $\Omega^a_{~a}=0$. 
\end{corollary}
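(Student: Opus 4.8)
The plan is to read off the claim directly from the trace of the curvature decomposition together with the structure of a special projective class. Recall from \eqref{c13} that the curvature of any representative connection decomposes as
\[
\Omega^a_{~b}=W^a_{~b}+\theta^a\dz\omega_b+\delta^a_{~b}\theta^c\dz\omega_c,
\]
and that $W^a_{~b}$ is totally traceless, in particular $W^a_{~a}=0$. Taking the trace $a=b$ of the whole identity therefore gives
\[
\Omega^a_{~a}=\theta^a\dz\omega_a+n\,\theta^c\dz\omega_c=(n+1)\,\theta^c\dz\omega_c,
\]
so that $\Omega^a_{~a}$ is controlled entirely by the scalar $2$-form $\theta^c\dz\omega_c$.

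\textbf{First} I would express $\theta^c\dz\omega_c$ in terms of the Schouten tensor. By \eqref{d1} we have $\omega_c=\theta^d\Rho_{dc}$, hence
\[
\theta^c\dz\omega_c=\theta^c\dz\theta^d\,\Rho_{dc}=\tfrac12\,\theta^c\dz\theta^d\,(\Rho_{dc}-\Rho_{cd})=-\theta^c\dz\theta^d\,\Rho_{[cd]},
\]
using the antisymmetry of $\theta^c\dz\theta^d$. Up to a constant factor this is precisely the $2$-form $\beta=\tfrac12\Rho_{[ab]}\theta^a\dz\theta^b$ introduced after \eqref{bi3}. \textbf{Then} I would invoke the hypothesis: if $\nabla$ belongs to a \emph{special} projective subclass, the Schouten tensor is symmetric, $\Rho_{ab}=\Rho_{(ab)}$, so $\Rho_{[ab]}=0$ and $\beta=0$. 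Combining the displays yields $\Omega^a_{~a}=(n+1)\,\theta^c\dz\omega_c=-(n+1)\,\theta^c\dz\theta^d\,\Rho_{[cd]}=0$, which is the assertion.

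\textbf{There is essentially no obstacle here} — the statement is a one-line consequence of the trace of \eqref{c13} plus the definition of "special." The only thing to be careful about is bookkeeping: checking the combinatorial factor $(n+1)$ correctly and making sure the antisymmetrisation conventions match those fixed earlier (so that $\theta^c\dz\omega_c$ really is a nonzero multiple of $\beta$ and not accidentally zero by symmetry alone). Alternatively, and even more directly, one can note that $\Rho_{ab}=\Rho_{(ab)}$ forces $\omega_b=\theta^a\Rho_{ab}$ to satisfy $\theta^c\dz\omega_c=\Rho_{(cd)}\theta^c\dz\theta^d=0$ since the symmetric tensor is wedged against an antisymmetric form, and then the traced structure equation \eqref{c13} gives $\Omega^a_{~a}=0$ immediately. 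I would present whichever of these two phrasings is shortest.
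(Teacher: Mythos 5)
Your proposal is correct and follows essentially the same route as the paper: both arguments reduce $\Omega^a_{~a}$ via the decomposition \eqref{c13} and the tracelessness of $W^a_{~b}$ to the scalar $2$-form $\theta^c\dz\omega_c=\Rho_{dc}\,\theta^c\dz\theta^d$, which vanishes because the symmetric Schouten tensor of a special connection is contracted against an antisymmetric wedge. The only cosmetic difference is the order of operations (you trace first and then kill the remaining term, the paper drops the $\delta^a_{~b}$ term first and then traces), so no further comment is needed.
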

\begin{proof}
For the connections from a special projective subclass we have
$\Rho_{ab}=\Rho_{ba}$. Hence
$\theta^a\dz \omega_a=\theta^a\dz\Rho_{ba}\theta^b\dz=0$, and
$\Omega^a_{~b}=W^a_{~b}+\theta^a\dz\omega_b$. Thus
$$\Omega^a_{~a}=W^a_{~a}+\theta^a\dz\omega_a=0,$$ because 
the Weyl form $W^a_{~b}$ is traceless.   
\end{proof}
\begin{remark}
We also remark that \emph{in dimension} $n=2$ \emph{the Weyl
tensor of a projective structure is identically zero}. In this
dimension the Cotton tensor provides the lowest order projective
invariant (see the last equation in (\ref{2tr})). In
dimension $n=3$ the Weyl tensor is generically non-zero, and may have as much as
\emph{fifteen} indpenednet components. It is also generiaclly nonzero
in dimensions higher than three.
\end{remark}

Given an open set $\mathcal U$ with coordinates $(x^a)$ surely the simplest
projective structure $[\nabla]$ is the one represented by the
connection $\nabla_a=\frac{\partial}{\partial x^a}$. This is called
the \emph{flat} projective structure on $\mathcal U$. The following
theorem is well known \cite{car,tom}:
\begin{theorem}
In dimension $n\geq 3$ a projective structure $[\nabla]$ is locally
projectively equivalent to the flat projective structure if and only
if its projective Weyl tensor vanishes identically, $W^a_{~bcd}\equiv 0$. In dimension $n=2$
a projective structure $[\nabla]$ is locally
projectively equivalent to the flat projective structure if and only
if its projective Schouten tensor vanishes identically, $Y_{abc}\equiv 0$.
\end{theorem}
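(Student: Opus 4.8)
The plan is to establish the two implications separately. The forward one is essentially immediate: if $[\nabla]$ is locally projectively equivalent to the flat structure, then in the flat representative the connection $1$-forms vanish identically, so by (\ref{c12}) the curvature $\Omega^a_{~b}$ vanishes and hence every irreducible piece in the decomposition (\ref{c13}) vanishes, in particular $W^a_{~bcd}=0$ and $\omega_a=0$, and then $\Rho_{ab}=0$ by (\ref{c41}) and $Y_{bca}=0$ by (\ref{dd1}). For $n\ge 3$ the Weyl tensor is projectively invariant by (\ref{2tr}), so $W^a_{~bcd}\equiv 0$ in every representative; for $n=2$ the Weyl tensor vanishes identically anyway, and then the last line of (\ref{2tr}), $\hat Y_{abc}=Y_{abc}+A_dW^d_{~cab}$, shows $Y_{abc}$ is itself projectively invariant, so $Y_{abc}\equiv 0$ everywhere. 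This disposes of the ``only if'' direction.

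For the converse, suppose $W^a_{~bcd}\equiv 0$ when $n\ge 3$, respectively $Y_{abc}\equiv 0$ when $n=2$. First I would pass to a \emph{special} representative, which exists locally by the corollaries above, so that $\Rho_{ab}=\Rho_{(ab)}$ and $\beta=0$; by Corollary \ref{kor} the curvature then takes the form $\Omega^a_{~b}=W^a_{~b}+\theta^a\dz\omega_b$. When $n\ge 3$ the hypothesis $W^a_{~bcd}\equiv 0$ forces $Y_{bca}\equiv 0$ via the Bianchi identity (\ref{bi2}); when $n=2$ this is the hypothesis while $W^a_{~b}\equiv 0$ is automatic. In all cases we are reduced to $\Omega^a_{~b}=\theta^a\dz\omega_b$ together with $\nabla_{[b}\Rho_{c]a}=0$. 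Next I look for a further projective change of gauge, with $1$-form $A=A_c\theta^c$, annihilating the Schouten tensor: by (\ref{2tr}) one needs $A_b$ to solve the first-order system $\nabla_aA_b=\Rho_{ab}+A_aA_b$. Its right-hand side is symmetric in $\{ab\}$, so any solution automatically satisfies $\der A=0$ and, by (\ref{2tr}), keeps $\hat\Rho_{[ab]}=0$, so the transformed representative is again special.

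The heart of the argument is the local solvability of $\nabla_aA_b=\Rho_{ab}+A_aA_b$. I would antisymmetrise a second covariant derivative and apply the Ricci identity (\ref{ri}); since $W^a_{~bcd}\equiv 0$ and $\Rho$ is symmetric one has $R^d_{~cab}=\delta^d_{~a}\Rho_{bc}-\delta^d_{~b}\Rho_{ac}$, and a short computation then shows the obstruction to integrability collapses to (a multiple of) $\nabla_{[a}\Rho_{b]c}$, which vanishes. Hence the Frobenius theorem yields a local solution $A_b$; in the resulting gauge $\hat\Rho_{ab}=0$, so $\hat\omega_a=0$, whence $\hat\Omega^a_{~b}=\hat W^a_{~b}+\theta^a\dz\hat\omega_b+\delta^a_{~b}\theta^c\dz\hat\omega_c=0$. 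Thus $\hat\Gamma^a_{~b}$ is flat and torsionless; such a connection admits locally a parallel frame, which commutes by torsionlessness and is therefore a coordinate frame, and in those coordinates $\hat\Gamma^a_{~bc}=0$, i.e. $\hat\nabla$ represents the flat projective structure. Since $\hat\nabla\in[\nabla]$, this proves the ``if'' direction. I expect that integrability computation --- checking that the curvature-type obstruction of $\nabla_aA_b=\Rho_{ab}+A_aA_b$ reduces exactly to the Cotton tensor, which is where the hypothesis is consumed --- to be the only real obstacle; the rest is bookkeeping with (\ref{cs}) and (\ref{2tr}). A more conceptual alternative would package (\ref{cs}) into the curvature of Cartan's normal projective connection, observe that this curvature is built solely from $W^a_{~bcd}$ and $Y_{abc}$ and so vanishes under our hypotheses, and then invoke the equivalence of a flat Cartan geometry with its homogeneous model $\mathbb{RP}^n$; I would avoid that route here since the Cartan bundle is set up only later in the paper.
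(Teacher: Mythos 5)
Your argument is correct, but there is nothing in the paper to compare it with: the paper states this theorem as classical and simply cites Cartan and Thomas, offering no proof (it also mislabels $Y_{abc}$ as the ``Schouten'' tensor, though by the paper's own definitions (\ref{c14})--(\ref{c15}) it is the Cotton tensor). Your forward direction is routine and correct. The converse is where the content lies, and the one step that needs care --- the Frobenius integrability of $\nabla_aA_b=\Rho_{ab}+A_aA_b$ in a special gauge --- does work out exactly as you predict: differentiating and antisymmetrising gives
$$\nabla_{[a}\nabla_{b]}A_c=\nabla_{[a}\Rho_{b]c}+\Rho_{[ab]}A_c+A_{[b}\Rho_{a]c},$$
the cubic terms $A_aA_bA_c$ cancelling, while the Ricci identity with $W^a_{~bcd}\equiv0$ and $\Rho_{[ab]}=0$ gives $\nabla_{[a}\nabla_{b]}A_c=-\tfrac12\bigl(\delta^d_{~a}\Rho_{bc}-\delta^d_{~b}\Rho_{ac}\bigr)A_d=A_{[b}\Rho_{a]c}$; the obstruction is therefore $\nabla_{[a}\Rho_{b]c}=\tfrac12Y_{abc}$, independent of $A$, and vanishes by hypothesis (via (\ref{bi2}) when $n\ge3$, directly when $n=2$). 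Since the obstruction is identically zero in the unknowns, Frobenius gives a local solution, the resulting gauge has $\hat\Omega^a_{~b}=0$, and a flat torsionless connection is locally the coordinate connection. One small bookkeeping point: you should note that $A$ being closed (which follows from the symmetry of the right-hand side together with torsion-freeness) is what guarantees, by Remark \ref{ko}, that the new representative is still special, so that the vanishing of $\hat\Rho_{ab}$ really kills all of $\hat\Omega^a_{~b}$ in the decomposition (\ref{c13}); you say this, and it is right. In short: a correct, self-contained proof supplied where the paper gives none.
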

\subsection{Cartan connection}\label{ser}
Objects $(\theta^a,\Gamma^b_{~c},\omega_d)$ can be collected to the
  \emph{Cartan connection} on an $H$ principal fiber bundle $H\to P\to
  M$ over
  $(M,[\nabla])$. Here $H$ is a subgroup of the $\slg(n+1,\bbR)$ group
  defined by:
$$H=\{ b\in\slg(n+1,\bbR)~|~b=\bma
  A^a_{~b}&0\\A_b&a^{-1}\ema,~A^a_{~b}\in\glg(n,\bbR),~A_a\in
  (\bbR^n)^*, ~a=\det(A^a_{~b})~\}.$$
Using $(\theta^a,\Gamma^b_{~c},\omega_d)$ we define an
$\sla(n+1,\bbR)$-valued 1-form
$$
{\mathcal A}=b^{-1}\bma\Gamma^a_{~b}-\frac{1}{n+1}\Gamma^c_{~c}\delta^a_{~b}&\theta^a\\\omega_b&-\frac{1}{n+1}\Gamma^c_{~c}\ema b+b^{-1}\der b.
$$  
This can be also written as 
$${\mathcal
  A}=\bma\hat{\Gamma}^a_{~b}-\frac{1}{n+1}\hat{\Gamma}^c_{~c}\delta^a_{~b}&\hat{\theta}^a\\\hat{\omega}_b&-\frac{1}{n+1}\hat{\Gamma}^c_{~c}\ema,$$
from which, knowing $b$, one can deduce the transformation rules
$$(\theta^a,\Gamma^b_{~c},\omega_d)\to
(\hat{\theta}^a,\hat{\Gamma}^b_{~c},\hat{\omega}_d),$$ 
see e.g. \cite{nn}. Note that when the coframe $\theta^a$ is fixed,
i.e. when $A^a_{~b}=\delta^a_{~b}$, these transformations coincide
with (\ref{pt}), (\ref{1tr}); the above setup extends these 
transformations to the situation when we allow the frame to change
under the action of the $\glg(n,\bbR)$ group.

The form $\mathcal A$ defines an $\sla(n+1,\bbR)$
Cartan connection on $H\to P\to M$. Its curvature
$${\mathcal R}=\der{\mathcal A}+{\mathcal A}\dz{\mathcal A},$$
satsifies 
$${\mathcal R}=b^{-1}\bma W^a_{~b}&0\\Y_b&0\ema b=\bma \hat{W}^a_{~b}&0\\\hat{Y}_b&0\ema,$$
and consists of the 2-forms $W^a_{~b}, Y_{b}$ as defined in
(\ref{cs}). In particular we have
$\hat{W}^a_{~b}=\tfrac12\hat{W}^a_{~bcd}\hat{\theta}^c\dz\hat{\theta}^d$,
and
$\hat{Y}_{a}=\tfrac12\hat{Y}_{abc}\hat{\theta}^b\dz\hat{\theta}^c$,
where $\hat{W}^a_{~bcd}$ and $\hat{Y}_{abc}$ are the transformed Weyl
and Cotton tensors.

Note that the $(n+n^2+n)$
1-forms $(\hat{\theta}^a,\hat{\Gamma}^b_{~c},\hat{\omega}_d)$
constitute a \emph{coframe} on the $(n^2+2n)$-dimensional bundle $H\to P\to M$; in particular these forms \emph{are linearly
independent} at each point of $P$. They satisfy the transformed Cartan
structure equations
\be\begin{aligned}
&\der\hat{\theta}^a+\hat{\Gamma}^a_{~b}\dz\hat{\theta}^b=0\\
&\der\hat{\Gamma}^a_{~b}+\hat{\Gamma}^a_{~c}\dz\hat{\Gamma}^c_{~b}=
\hat{W}^a_{~b}+\hat{\theta}^a\dz\hat{\omega}_b+\delta^a_{~b}\hat{\theta}^c\dz\hat{\omega}_c\\
&\der\hat{\omega}_a+\hat{\omega}_b\dz\hat{\Gamma}^b_{~a}=\hat{Y}_a.
\end{aligned}\label{cst}\ee 
\subsection{Fefferman metrics}
In Ref. \cite{ns}, with \emph{any} point equivalence class of second
order ODEs $y''=Q(x,y,y')$, we associated a certain 4-dimensional
manifold $P/\sim$ equipped with a \emph{conformal} class of
metrics of \emph{split} signature $[g_F]$, whose \emph{conformal
  invariants} encoded all the \emph{point invariants} of the ODEs from
the point equivalent class. By analogy with the theory of
3-dimensional CR structures we called the class $[g_F]$ the
\emph{Fefferman class}. The manifold $P$ from 
$P/\sim$ was a principal fiber bundle ${\mathcal
  H}\to P\to N$ over a \emph{three}-dimensional manifold $N$, which was
identified with the first jet space ${\mathcal J}^1$ of an ODE from
the equivalence class. The bundle $P$ was 8-dimensional, and
  ${\mathcal H}$ was a \emph{five}-dimensional parabolic subgroup of
$\slg(3,\bbR)$. For each point equivalnce class of ODEs
$y''=Q(x,y,y')$, the 
\emph{Cartan normal conformal connection} of the corresponding 
Fefferman metrics $[g_F]$, was reduced to a certain $\sla(3,\bbR)$
Cartan connection $\mathcal A$ on $P$. 
The \emph{two main components of the 
curvature} of this connection were the \emph{two classical basic point 
invariants} of the class $y''=Q(x,y,y')$, namely:
$$w_1=D^2Q_{y'y'} - 4DQ_{yy'} - DQ_{y'y'} Q_{y'} + 4Q_{y'}
  Q_{yy'} - 3Q_{y'y'} Q_y + 6Q_{yy},$$
and
$$ w_2=Q_{y'y'y'y'}.$$  
If \emph{both of these invariants were nonvanishing} the Cartan bundle that
encoded the structure of a point equivalence class of ODEs
$y''=Q(x,y,y')$ was just ${\mathcal H}\to{\mathcal P}\to N$ with the
Cartan connection $\mathcal A$. The nonvanishing of $w_1w_2$, was
reflected in the fact that the corresponding Fefferman metrics were
always of the Petrov type $N\times N'$, and never selfdual nor
antiselfdual. 

In case of $w_1w_2\equiv 0$, the
situation was more special \cite{nn}: the Cartan bundle ${\mathcal H}\to P\to N$
was also defining a Cartan bundle $H\to P\to M$, over a \emph{two}-dimensional
manifold $M$, with the \emph{six}-dimensional parabolic subgroup $H$
of $\slg(3,\bbR)$ as the structure group. The
manifold $M$ was identified with the solution space of an ODE
representing the point equivalent class. Furthermore the space $M$ was
naturally equipped with a \emph{projective structure} $[\nabla]$, whose
invariants were in one-to-one correspondence with the point invariants
of the ODE. This one-to-one correspondence was realized in terms of
the $\sla(3,\bbR)$ connection ${\mathcal A}$. This, although initially defined 
as a canonical $\sla(3,\bbR)$ connection on 
${\mathcal H}\to P\to N$, in the special case of $w_1w_2\equiv 0$ 
became the $\sla(3,\bbR)$-valued \emph{Cartan normal projective connection} of the structure 
$(M,[\nabla])$ on the Cartan bundle $H\to P\to M$. In such a case the
corresponding Fefferman class $[g_F]$ on $P/\sim$ became
\emph{selfdual} or \emph{antiselfdual} depending on which of the
invariants $w_1$ or $w_2$ vanished.

What we have overlooked in the discussions in \cite{nn,ns}, was that in the
case of $w_2\equiv 0$, $w_1\neq 0$ we could have defined \emph{two},
\emph{a priori different} Fefferman classes $[g_F]$ and $[g_F']$. As we see below 
the construction of these classes totally relies on the fact that 
we had a canonical projective structure $[\nabla]$ on $M$. Actually we
have the following theorem.

\begin{theorem}
Every $n$-dimensional manifold $M$ with a projective structure
$[\nabla]$ uniquely defines \emph{a number} $n$ of conformal metrics
$[g^a]$, each of \emph{split} signature $(n,n)$, and each 
defined on its own natural $2n$-dimensional subbundle $P_a=P/(\sim_a)$ of the 
Cartan projective bundle $H\to P\to M$.      
\end{theorem}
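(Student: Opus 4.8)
The plan is to build each conformal class $[g^a]$ directly out of the Cartan connection $\mathcal A$ on $H\to P\to M$, by first passing to a suitable subbundle and then writing down an explicit degenerate-looking bilinear form which, on the quotient, becomes a genuine split-signature metric defined up to scale. The model is the Fefferman construction recalled in the previous subsection: there the four-dimensional conformal metric on $P/\!\sim$ was assembled from the canonical coframe $(\hat\theta^a,\hat\Gamma^b_{~c},\hat\omega_d)$ on the Cartan bundle, and here the analogue must be done on an $H$-bundle with $\dim P=n^2+2n$ and a target dimension $2n$. So first I would identify, for each fixed index value $a\in\{1,\dots,n\}$, the correct subgroup $H_a\subset H$ (equivalently the correct foliation $\sim_a$) whose leaves have dimension $(n^2+2n)-2n=n^2$; the natural candidate is the stabiliser inside $H$ of the line/flag singled out by the $a$-th coordinate direction, so that $H/H_a$ is $2n$-dimensional and $P_a:=P/(\sim_a)$ is a $2n$-dimensional manifold fibred over $M$.

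Next I would produce the metric. On $P$ we have the $\mathfrak{sl}(n+1,\mathbb R)$-valued $1$-form $\mathcal A$ with entries $(\hat\theta^a,\hat\Gamma^b_{~c}-\tfrac1{n+1}\hat\Gamma^c_{~c}\delta^b_{~c},\hat\omega_b,-\tfrac1{n+1}\hat\Gamma^c_{~c})$, all linearly independent. Fixing $a$, I would single out the $2n$ of these $1$-forms that descend to $P_a$ — morally $\hat\theta^1,\dots,\hat\theta^n$ together with $\hat\Gamma^a_{~1},\dots,\widehat{\hat\Gamma^a_{~a}},\dots,\hat\Gamma^a_{~n}$ and $\hat\omega_a$ (the precise list is dictated by which forms annihilate the vector fields tangent to the $\sim_a$-leaves) — and define
\be
g^a \;=\; 2\,\hat\theta^a\,\hat\omega_a \;+\; \sum_{b\neq a}\hat\theta^b\,\hat\Gamma^a_{~b}
\ee
(up to normalisation constants and possibly a correction term in $\hat\Gamma^c_{~c}$), a bilinear form built from $n$ "null pairs". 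By construction it has split signature $(n,n)$ on the $2n$-dimensional space transverse to the $\sim_a$-foliation. The two things to check are: (i) $g^a$ is \emph{basic} for the foliation $\sim_a$, i.e. both $\iota_V g^a=0$ and $\mathcal L_V g^a \equiv 0 \pmod{g^a}$ for every $V$ tangent to the leaves — here one uses the structure equations (\ref{cst}) and the explicit form of $\mathcal R = b^{-1}\bma W^a_{~b}&0\\Y_b&0\ema b$ to show that the Lie derivatives close up on the conformal class; and (ii) independence of choices: the residual freedom in choosing the representative connection $\nabla\in[\nabla]$, which acts on $(\hat\omega_a,\hat Y_a)$ via the gauge transformations (\ref{1tr}), (\ref{2tr}), only rescales $g^a$ and shifts it by multiples of itself, so the conformal class $[g^a]$ on $P_a$ is well-defined and projectively invariant.

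The main obstacle I expect is step (i): verifying that the naive degenerate form descends, i.e. that it is preserved modulo rescaling along the $n^2$-dimensional fibres of $P\to P_a$. This is exactly where the special structure of the projective Cartan connection enters — the vanishing of the "extra" curvature entries (the lower-right block of $\mathcal R$ is zero, and $W^a_{~b}$, $Y_b$ are the \emph{only} curvature components) is what makes the Lie derivative of $g^a$ along a leaf-vector reproduce $g^a$ up to scale rather than generating new terms. Concretely one computes $\mathcal L_V g^a$ using $\mathcal L_V = \der\iota_V + \iota_V\der$, substitutes the Cartan equations (\ref{cst}) for $\der\hat\theta^a$, $\der\hat\Gamma^a_{~b}$, $\der\hat\omega_a$, and checks that every term either vanishes by $\iota_V\hat\theta^b=0$-type relations or is proportional to $g^a$. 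The signature claim and the dimension count are then routine linear algebra. Once this is done for one value of $a$, the same argument applies verbatim to each of the $n$ coordinate directions, giving the asserted $n$ distinct conformal split-signature structures $[g^a]$ on the $n$ subbundles $P_a$, each carrying the full projectively invariant information of $(M,[\nabla])$ through the curvature of $\mathcal A$.
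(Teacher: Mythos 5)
Your overall architecture coincides with the paper's: fix $a$, build a degenerate bilinear form on $P$ from the canonical coframe, exhibit an $n^2$-dimensional integrable distribution of degenerate directions, show the form is Lie-transported conformally along its leaves, and descend to the $2n$-dimensional quotient $P_a$. But the one concrete ingredient you commit to --- the formula for the metric --- is not the right one, and the discrepancy is not absorbed by your hedge about ``normalisation constants and a correction term''. The paper takes
$$\hat{g}^a=2\bigl(\hat{\Gamma}^a_{~b}-\tfrac{2}{n+1}\hat{\Gamma}^c_{~c}\delta^a_{~b}\bigr)\hat{\theta}^b=2{\mathcal A}^a_{~\mu}{\mathcal A}^\mu_{~n+1},$$
which involves only the $\hat{\theta}$'s and the trace-corrected $\hat{\Gamma}^a_{~b}$'s; in particular \emph{all} $n$ vectors $X^b$ dual to $\hat{\omega}_b$ lie in the kernel, the remaining $n(n-1)$ kernel directions being $Z^c_{~D}=X^c_{~D}-\tfrac{2}{n-1}X^d_{~d}\delta^c_{~D}$ with $D\neq a$. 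Your candidate $g^a=2\hat{\theta}^a\hat{\omega}_a+\sum_{b\neq a}\hat{\theta}^b\hat{\Gamma}^a_{~b}$ instead puts $\hat{\omega}_a$ into the metric. For that form to descend, $\hat{\omega}_a$ must be semi-basic for the foliation, so $X^a$ must be excluded from the leaf distribution; but then the annihilator of the leaves is spanned by $(\hat{\theta}^c,\hat{\Gamma}^a_{~d},\hat{\omega}_a)$, and the third structure equation $\der\hat{\omega}_a+\hat{\omega}_b\dz\hat{\Gamma}^b_{~a}=\hat{Y}_a$ produces, for each $b\neq a$, a term $-\hat{\omega}_b\dz\hat{\Gamma}^b_{~a}$ in which neither factor lies in that span (the span contains $\hat{\Gamma}$'s with \emph{upper} index $a$, not lower). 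Since the coframe is linearly independent these terms cannot cancel, the Frobenius condition fails, and there is no foliation to quotient by. The correct pairing for $\hat{\theta}^a$ is with $\hat{\Gamma}^a_{~a}-\tfrac{2}{n+1}\hat{\Gamma}^c_{~c}$, not with $\hat{\omega}_a$.

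Beyond this, the two computations on which everything rests --- integrability of the $n^2$-plane distribution, and the conformal transport ${\mathcal L}_{X^b}\hat{g}^a=0$, ${\mathcal L}_{Z^c_{~D}}\hat{g}^a=\tfrac{2}{n-1}\delta^c_{~D}\hat{g}^a$ --- are only announced, not carried out; you rightly identify them as the crux but leave them as ``the main obstacle I expect'', and for the form you actually wrote down the first of them fails. So the proposal does not yet prove the theorem. (A minor further slip: with $\dim H=n^2+n$ and $n^2$-dimensional leaves, $H/H_a$ is $n$-dimensional, not $2n$-dimensional; it is $P_a=P/(\sim_a)$ that has dimension $2n$.)
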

\begin{proof}
Given $(M,[\nabla])$ we will construct the pair $(P_a,[g^a])$ for each
$a=1,\dots,n$. We use the notation of Section \ref{ser}.

Let $(X_a,X^b_{~c},X^d)$ be a frame of vector fields on $P$ dual
to the coframe
$(\hat{\theta}^a,\hat{\Gamma}^b_{~c},\hat{\omega}_d)$. This means that
\be
X_a\hook \hat{\theta}^b=\delta^b_{~a},\quad\quad
X^a_{~b}\hook\hat{\Gamma}^c_{~d}=\delta^a_{~d}\delta^c_{~b},\quad\quad
X^a\hook\hat{\omega}_b=\delta^a_{~b},\label{hooks}\ee
at each point, with \emph{all} other contractions being zero. 

We now
define a number of $n$ bilinear forms $\hat{g}^a$ on $P$ defined by
$$\hat{g}^a=(\hat{\Gamma}^a_{~b}-\tfrac{2}{n+1}\hat{\Gamma}^c_{~c}\delta^a_{~b})\otimes
\hat{\theta}^b+\hat{\theta}^b\otimes(\hat{\Gamma}^a_{~b}-\tfrac{2}{n+1}\hat{\Gamma}^c_{~c}\delta^a_{~b}),$$
or
$$\hat{g}^a=2(\hat{\Gamma}^a_{~b}-\tfrac{2}{n+1}\hat{\Gamma}^c_{~c}\delta^a_{~b})\hat{\theta}^b,$$ 
for short. In this second formula we have used the classical notation, such
as for example in $g=g_{ab}\theta^a\theta^b$, which abreviates the symmetrized tensor product of two
1-forms $\lambda$ and $\mu$ on $P$ to $\lambda\otimes\mu+\mu\otimes\lambda=2\lambda\mu$. 

We note that the formula for $\hat{g}^a$, when written in terms of the
Cartan connection $\mathcal A$, reads\footnote{Compare with the defining formula
for $G$ in \cite{ns}}:
$$\hat{g}^a=2{\mathcal A}^a_{~\mu}{\mathcal A}^\mu_{~n+1},$$
where the index $\mu$ is summed over $\mu=1,\dots,n,n+1$. Indeed:
$$2{\mathcal A}^a_{~\mu}{\mathcal A}^\mu_{~n+1}=2(\hat{\Gamma}^a_{~b}-\tfrac{1}{n+1}\hat{\Gamma}^c_{~c}\delta^a_{~b})\hat{\theta}^b+2\hat{\theta}^a(-\tfrac{1}{n+1}\hat{\Gamma}^c_{~c})=2(\hat{\Gamma}^a_{~b}-\tfrac{2}{n+1}\hat{\Gamma}^c_{~c}\delta^a_{~b})\hat{\theta}^b=\hat{g}^a.$$
The bilinear forms $\hat{g}^a$ are \emph{degenerate} on $P$. For each
\emph{fixed} value
of the index $a$, $a=1,\dots,n$, they have $n^2$ degenerate directions
spanned by $(X^b,Z^c_{~D})$, where $b,c=1,\dots, n$ and $D=1,\dots, n$
\emph{without} $D=a$. The $n(n-1)$ vector fields $Z^c_{~D}$ are
defined to be
$$Z^c_{~D}=X^c_{~D}-\tfrac{2}{n-1}X^d_{~d}\delta^c_{~D}.$$  
Obviously $(X^b,Z^c_{~D})$ annihilate all $\theta^b$s. Also obviously
all $X^b$s annihilate all
$(\hat{\Gamma}^a_{~b}-\tfrac{1}{n+1}\hat{\Gamma}^c_{~c}\delta^a_{~b})$s. To
see that all $Z^c_{~D}$s annihilate all 
$(\hat{\Gamma}^a_{~b}-\tfrac{1}{n+1}\hat{\Gamma}^c_{~c}\delta^a_{~b})$s 
we extend the definition of $Z^c_{~D}$s to 
$$Z^c_{~f}=X^c_{~f}-\tfrac{2}{n-1}X^d_{~d}\delta^c_{~f},$$
where now $f=1,\dots, n$.
For these we get:
$$Z^c_{~d}\hook(\hat{\Gamma}^a_{~b}-\tfrac{2}{n+1}\hat{\Gamma}^h_{~h}\delta^a_{~b})=
\delta^c_{~b}\delta^a_{~d}.$$
Thus, if $d\neq a$ we see that each $Z^c_{~d}$ anihilates
$\hat{\Gamma}^a_{~b}-\tfrac{2}{n+1}\hat{\Gamma}^h_{~h}\delta^a_{~b}$. 
Hence $n(n-1)$ directions $Z^a_{~D}$ are degenarate directions for
$\hat{g}^a$.

Another observation is that the $n^2$ degenerate directions
$(X^b,Z^c_{~D})$ form an \emph{integrable} distribution. This is
simplest to see by considering their annihilator. At each point this
is spanned by the $2n$ one-forms
$(\hat{\theta}^b,\hat{\Gamma}^{(a)}_{~b}-\tfrac{2}{n+1}\hat{\Gamma}^h_{~h}\delta^{(a)}_{~b})$,
where the index $(a)$ in brackets says that it is a fixed $a$ which is
not present in the range of indices $D$. Now using (\ref{cst}) it is
straightforward to see that the forms
$(\hat{\theta}^b,\hat{\tau}^{(a)}_{~b})=(\hat{\theta}^b,\hat{\Gamma}^{(a)}_{~b}-\tfrac{2}{n+1}\hat{\Gamma}^h_{~h}\delta^{(a)}_{~b})$
satisfy the Frobenius condition
$$\der\hat{\theta}^a\dz\hat{\theta}^1\dz\dots\dz\hat{\theta}^n=0,$$
$$\der\hat{\tau}^{(a)}_{~b}\dz\hat{\tau}^{(a)}_{~1}\dz
\dots\dz\hat{\tau}^{(a)}_{~n}\dz\hat{\theta}^1\dz\dots\dz\hat{\theta}^n=0.$$
Thus the $n^2$-dimensional distribution spanned by $(X^b,Z^c_{~D})$ is
integrable.

Now, using (\ref{cst}) we calculate the Lie derivatives of $\hat{g}^a$
with respect to the directions $(X^b,Z^c_{~D})$. It is easy to see
that:
$${\mathcal L}_{X^b}\hat{g}^a=0$$
and 
$${\mathcal
  L}_{Z^c_{~d}}\hat{g}^a=-\delta^a_{~d}\hat{g}^c+\tfrac{2}{n-1}\delta^c_{~d}\hat{g}^a.$$
The last equation means also that
$${\mathcal
  L}_{Z^c_{~D}}\hat{g}^a=\tfrac{2}{n-1}\delta^c_{~D}\hat{g}^a.$$
Thus, the bilinear form $\hat{g}^a$ transforms \emph{conformally} when
Lie transported along the integrable distribution spanned by
$(X^b,Z^c_{~D})$. 

Now, for each fixed $a=1,\dots,n$, we introduce an equivalence
relation $\sim_a$ on $P$, which identifies points on
the same integral leaf of $\Span(X^b,Z^c_{~D})$. On the
$2n$-dimensional leaf space $P_a=P/{(\sim_a)}$ the $n^2$ degenerate
directions for $\hat{g}^a$ are squeezed to points. Since the remainder
of $\hat{g}^a$ is given up to a conformal rescalling on each leaf,
the bilinear form $\hat{g}^a$ \emph{descends} to a \emph{unique conformal
class} $[g^a]$ of metrics, which on $P_a$ have \emph{split signature}
$(n,n)$. Thus, for each $a=1,\dots,n$ we have constructed the
$2n$-dimensional split signature conformal structure
$(P_a,[g^a])$. It follows from the construction that $P_a$ may b
identified with \emph{any} $2n$-dimensional submanifold $\tilde{P}_a$
of $P$, which is \emph{transversal to the leaves of}
$\Span(X^b,Z^c_{~D})$. The conformal class $[g^a]$ is represented on
each $\tilde{P}^a$ by the \emph{restriction}
$g^a=\hat{g}^a_{~|\tilde{P}_a}$. This finishes the proof of the theorem.
\end{proof}
One can calculate the Cartan normal conformal connection for the
conformal structures  $(P_a,g^a)$. This is a lengthy, but
straightforward calculation. The result is given in the following
theorem.
\begin{theorem}
In the null frame $(\hat{\tau}^{(a)}_{~b},\hat{\theta}^c)$ the Cartan
normal conformal connection for the metric $\hat{g}^a$ is given by:
$$
G=\bma
-\tfrac{1}{n+1}\hat{\Gamma}^d_{~d}&0&-\hat{\omega}_c&0\\
&&&\\
\hat{\tau}^{(a)}_{~b}&-\hat{\Gamma}^e_{~b}+\tfrac{1}{n+1}\hat{\Gamma}^d_{~d}\delta^e_{~b}&\hat{\theta}^d\hat{R}^{(a)}_{~dcb}&-\hat{\omega}_b\\
&&&\\
\hat{\theta}^f&0&\hat{\Gamma}^f_{~c}-
\tfrac{1}{n+1}\hat{\Gamma}^d_{~d}\delta^f_{~c}&0\\
&&&\\
0&\hat{\theta}^e&\hat{\tau}^{(a)}_{~c}&\tfrac{1}{n+1}\hat{\Gamma}^d_{~d}
\ema.
$$
Its curvature $R=\der G+G\dz G$ is given by:
$$R=\bma
0&0&-\hat{Y}_c&0\\
&&&\\
0&-\hat{W}^e_{~b}&\hat{S}_{cb}&-\hat{Y}_b\\
&&&\\
0&0&\hat{W}^f_{~c}&0\\
&&&\\
0&0&0&0
\ema,$$
where 
$$
\begin{aligned}
&\hat{S}_{cb}=-\hat{\theta}^d(\hat{D}\hat{R}^{(a)}_{~dcb}-\hat{\tau}^{(a)}_{~s}\hat{W}^s_{~dcb})=\\
&-\hat{\theta}^d(\hat{D}\hat{W}^{(a)}_{~dcb}-\hat{\tau}^{(a)}_{~s}\hat{W}^s_{~dcb})+\delta^{(a)}_{~b}\hat{Y}_c-\delta^{(a)}_{~c}\hat{Y}_b.\end{aligned}$$
\end{theorem}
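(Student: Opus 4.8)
The plan is to verify directly that the $\sla(n+1,\bbR)$-valued matrix $G$ written above is the Cartan normal conformal connection for the metric $\hat g^a$, and then to compute its curvature $R=\der G+G\dz G$. First I would set up the null coframe adapted to $\hat g^a$. On a transversal submanifold $\tilde P_a$ the degenerate directions are killed and $\hat g^a$ restricts to a genuine split-signature metric; in the coframe $(\hat\tau^{(a)}_{~b},\hat\theta^c)$ the metric reads $\hat g^a=2\,\hat\tau^{(a)}_{~c}\hat\theta^c$ (sum over $c$), i.e. the only nonzero metric components pair a $\hat\tau$ with a $\hat\theta$. This is exactly the null-frame normalization in which the standard Cartan conformal connection takes the block form with $\pm\tfrac{1}{n+1}\hat\Gamma^d_{~d}$ in the corners (the conformal-weight piece), an $\og(n,n)$-valued block on the diagonal, and the Schouten-type insertion $\hat\theta^d\hat R^{(a)}_{~dcb}$ in the upper-right $n\times n$ block. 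So the first step is to confirm that $G$ lies in the correct parabolic subalgebra, that its soldering part is $(\hat\tau^{(a)}_{~b},\hat\theta^c)$, and that its $\mathfrak{so}(n,n)$ part is determined by the Levi-Civita connection of $\hat g^a$ — this last point being a computation using the structure equations \nn{cst} to read off $\der(\hat\tau^{(a)}_{~b},\hat\theta^c)$ in terms of the proposed connection coefficients.

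Next I would impose the two normalization conditions that pin down the Cartan \emph{normal} conformal connection: the soldering form is the tautological one (already arranged), and the curvature $R$ is \emph{trace-free} in the appropriate sense — the $\mathfrak{so}(n,n)$ Ricci-type contraction of $R$ vanishes. The natural route is: (i) write $G=G_0+G_1$ where $G_0$ collects the terms built from $(\hat\theta^a,\hat\Gamma^b_{~c},\hat\omega_d)$ and $G_1$ is the single off-diagonal term $\hat\theta^d\hat R^{(a)}_{~dcb}$; (ii) compute $\der G+G\dz G$ block by block using \nn{cst}, i.e. $\der\hat\theta^a=-\hat\Gamma^a_{~b}\dz\hat\theta^b$, $\der\hat\Gamma^a_{~b}=-\hat\Gamma^a_{~c}\dz\hat\Gamma^c_{~b}+\hat W^a_{~b}+\hat\theta^a\dz\hat\omega_b+\delta^a_{~b}\hat\theta^c\dz\hat\omega_c$, and $\der\hat\omega_a=-\hat\omega_b\dz\hat\Gamma^b_{~a}+\hat Y_a$. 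Most blocks will collapse: the $\hat\theta\dz\hat\omega$ pieces coming from $\der\hat\Gamma$ are exactly cancelled by the $-\tfrac{1}{n+1}\hat\Gamma^d_{~d}$ corrections and by cross terms with the corner entries, leaving $-\hat W^e_{~b}$ on the diagonal block and $\pm\hat W^f_{~c}$, $\pm\hat Y_c$, $\pm\hat Y_b$ in the rim — precisely the displayed $R$. The genuinely substantive block is the upper-right $\hat S_{cb}$ entry: its computation produces $\der(\hat\theta^d\hat R^{(a)}_{~dcb})$ plus $G\dz G$ cross terms, which reorganize into $-\hat\theta^d(\hat D\hat R^{(a)}_{~dcb}-\hat\tau^{(a)}_{~s}\hat W^s_{~dcb})$; the second displayed form of $\hat S_{cb}$ then follows by substituting $\hat R^{(a)}_{~dcb}=\hat W^{(a)}_{~dcb}+\delta^{(a)}_{~c}\hat\Rho_{bd}-\delta^{(a)}_{~d}\hat\Rho_{cb}-2\delta^{(a)}_{~b}\hat\Rho_{[cd]}$ together with \nn{dd1} relating $\nabla_{[d}\hat\Rho_{c]b}$ to $\hat Y_{dcb}$.

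The main obstacle I anticipate is the bookkeeping in the $\hat S_{cb}$ block: one must track the covariant exterior derivative $\hat D$ acting on a tensor-valued form that carries the \emph{fixed} index $(a)$ (which is \emph{not} summed and whose $\hat\Gamma^{(a)}_{~s}$-connection term is what produces the $\hat\tau^{(a)}_{~s}\hat W^s_{~dcb}$ correction), and one must be careful that the Ricci identity \nn{ri} is applied with the right signs when commuting $\hat D$ past the structure equations. A secondary subtlety is checking that the resulting $G$ is genuinely \emph{normal}, i.e. that the $R$ displayed above has vanishing $\mathfrak{so}(n,n)$-trace; this amounts to verifying $\hat W^a_{~bac}=0$ and the contracted Bianchi identity \nn{bi2}, both of which are available from the earlier sections, so no new input is needed — the normality is inherited from the tracelessness of the projective Weyl form. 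Once the block-by-block computation is assembled and these two identities are invoked, the theorem follows; I would present only the nontrivial $\hat S_{cb}$ reduction in detail and state that the remaining blocks are a direct application of \nn{cst}.
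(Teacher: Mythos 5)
Your proposal is correct and coincides with what the paper actually does: the paper offers no written proof beyond declaring the result "a lengthy, but straightforward calculation," and your outline — verify that $G$ has the tautological soldering part and metric ($\mathfrak{so}(n,n)$-compatible) middle block for $\hat{g}^a=2\hat{\tau}^{(a)}_{~b}\hat{\theta}^b$, check normality via $\hat{W}^a_{~bac}=0$ and the contracted Bianchi identity, and then compute $R=\der G+G\dz G$ block by block from \nn{cst}, isolating the $\hat{S}_{cb}$ entry and reducing its second form via the decomposition of $\hat{R}^{(a)}_{~dcb}$ and \nn{dd1} — is precisely that computation, organized sensibly. No gap to report.
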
 
\section{When a projective class includes a Levi-Civita connection?}\label{sec2}
\subsection{Projective structures of the Levi-Civita connection}\label{sec22}
Let us now assume that an $n$-dimensional manifold $M$ is equipped
with a (pseudo)\-Riemannian metric $\hat{g}$. We denote its 
\emph{Levi-Civita connection} by $\hat{\nabla}$. Levi-Civita
connection $\hat{\nabla}$ 
defines its \emph{projevtive class} $[\nabla]$ 
with connections $\nabla$ such that (\ref{prst}) holds. Now, with the
Levi-Civita representative $\hat{\nabla}$ of $[\nabla]$ we can define
its curvature $\hat{\Omega}^a_{~b}$, as in (\ref{c12}), and decompose
it onto the \emph{projective} Weyl and Schouten tensors
$\hat{W}^a_{~bcd},\hat{\Rho}_{ab}$, as in
(\ref{c13}):
\be
\hat{\Omega}^a_{~b}=\hat{W}^a_{~b}+\theta^a\dz\hat{\omega}_b+\delta^a_{~b}\theta^c\dz\hat{\omega}_c.\label{cf1}\ee
However, since now $M$ has an additional metric structure
$\hat{g}=\hat{g}_{ab}\theta^a\theta^b$, with the inverse
$\hat{g}^{ab}$ such that $\hat{g}_{ab}\hat{g}^{bc}=\delta^c_{a}$,
another decomposition of the curvature is possible. This is the 
decomposition onto the \emph{metric} Weyl and Schouten tensors
$\weyl^a_{~bcd}, \schou_{ab}$, given by:
\be
\hat{\Omega}^a_{~b}=\weyl^a_{~b}+\hat{g}^{ac}\hat{g}_{bd}\oms_c\dz\theta^d+\theta^a\dz\oms_b.\label{cf2}\ee
The tensor counterparts of the formulae (\ref{cf1})-(\ref{cf2}) are
respectively:
\be
\begin{aligned}
&\hat{R}^a_{~bcd}=\hat{W}^a_{~bcd}+\delta^a_{~c}\hat{\Rho}_{db}-\delta^a_{~d}\hat{\Rho}_{cb}-
2\delta^a_{~b}\hat{\Rho}_{[cd]}\\
&\hat{R}^a_{~bcd}=\weyl^a_{~bcd}+\delta^a_{~c}\schou_{db}-\delta^a_{~d}\schou_{cb}
+\hat{g}_{bd}\hat{g}^{ae}\schou_{ec}-\hat{g}_{bc}\hat{g}^{ae}\schou_{ed}.
\end{aligned}
\label{incop}\ee
To find relations between the projective and the metric Weyl and
Schouten tensors one compares the r.h. sides of
(\ref{incop}). For example, because of the equality on the left hand
sides of (\ref{incop}), the projective and the Levi-Civita Ricci
tensors are equal:
$$\hat{R}_{bd}=\hat{R}^a_{~bad}=\rlc_{bd}.$$
Thus, via (\ref{c41}), we get
\be
\hat{\Rho}_{ab}=\tfrac{1}{n-1}\rlc_{ab}.\label{rholc}\ee
Further relations between the projective and Levi-Civita objects can
be obtained by recalling that:  
$$\rlc_{ab}=(n-2)\schou_{ab}+\hat{g}_{ab}\schou,$$
where
$$\schou=\hat{g}^{ab}\schou_{ab},$$
and that the Levi-Civita Ricci scalar is given by:
$$\rlc=\hat{g}^{ab}\rlc_{ab}.$$
After some algebra we get the following proposition.
\begin{proposition}\label{pr211} 
The projective Schouten tensor $\hat{\Rho}_{ab}$ for the
Levi-Civita connection $\hat{\nabla}$ \emph{is related} to the metric
Schouten tensor $\schou_{ab}$ via:
$$\hat{\Rho}_{ab}=\schou_{ab}-\frac{1}{(n-1)(n-2)}G_{ab},$$
where $G_{ab}$ is the \emph{Einstein tensor} for the Levi-Civita 
connection:
$$G_{ab}=\rlc_{ab}-\tfrac12 \hat{g}_{ab}\rlc.$$
The projective Weyl tensor $\hat{W}^a_{~bcd}$ for the
Levi-Civita connection $\hat{\nabla}$ is related to 
the metric Weyl tensor $\weyl^a_{~bcd}$ via:
\be
\begin{aligned}
&\hat{W}^a_{~bcd}=\weyl^a_{~bcd}~+~\frac{1}{n-2}(\hat{g}_{bd}\hat{g}^{ae}\rlc_{ec}-\hat{g}_{bc}\hat{g}^{ae}\rlc_{ed})~+\\
&\frac{1}{(n-1)(n-2)}(\delta^a_{~c}\rlc_{db}-\delta^a_{~d}\rlc_{cb})~+~
\frac{\rlc}{(n-1)(n-2)}(\delta^a_{~d}\hat{g}_{bc}-\delta^a_{~c}\hat{g}_{bd}).\end{aligned}\label{ww}\ee
\end{proposition}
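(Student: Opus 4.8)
The plan is to compare the two curvature decompositions \eqref{incop} term by term, using the already-established identity \eqref{rholc} together with the standard metric-decomposition relations $\rlc_{ab}=(n-2)\schou_{ab}+\hat{g}_{ab}\schou$ and $\schou=\hat g^{ab}\schou_{ab}$. First I would subtract the two lines of \eqref{incop}; since the left-hand sides are equal (both are $\hat R^a_{~bcd}$), this gives
\[
\hat{W}^a_{~bcd}-\weyl^a_{~bcd}
=\delta^a_{~c}(\schou_{db}-\hat{\Rho}_{db})-\delta^a_{~d}(\schou_{cb}-\hat{\Rho}_{cb})
+2\delta^a_{~b}\hat{\Rho}_{[cd]}
+\hat{g}_{bd}\hat g^{ae}\schou_{ec}-\hat{g}_{bc}\hat g^{ae}\schou_{ed}.
\]
Because $\hat{\nabla}$ is a Levi-Civita connection its Ricci tensor is symmetric, so by \eqref{rholc} $\hat{\Rho}_{ab}$ is symmetric and the $\delta^a_{~b}\hat{\Rho}_{[cd]}$ term drops out. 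This is the term I expect to need the one genuinely structural input (symmetry of the metric Ricci tensor); everything after that is bookkeeping.

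Next I would eliminate $\schou_{ab}$ in favor of $\rlc_{ab}$. Solving $\rlc_{ab}=(n-2)\schou_{ab}+\hat g_{ab}\schou$ and contracting with $\hat g^{ab}$ to get $\rlc=(2n-2)\schou$, hence $\schou=\tfrac{1}{2(n-1)}\rlc$ and
\[
\schou_{ab}=\tfrac{1}{n-2}\Big(\rlc_{ab}-\tfrac{1}{2(n-1)}\hat g_{ab}\rlc\Big).
\]
Substituting this, together with $\hat{\Rho}_{ab}=\tfrac{1}{n-1}\rlc_{ab}$, into the expression for $\hat W^a_{~bcd}-\weyl^a_{~bcd}$ and collecting the coefficients of $\delta^a_{~c}\rlc_{db}$, $\delta^a_{~d}\rlc_{cb}$, $\hat g_{bd}\hat g^{ae}\rlc_{ec}$, $\hat g_{bc}\hat g^{ae}\rlc_{ed}$, and the two scalar-curvature terms $\delta^a_{~d}\hat g_{bc}\rlc$, $\delta^a_{~c}\hat g_{bd}\rlc$ should reproduce exactly \eqref{ww}. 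The coefficient of $\delta^a_{~c}\schou_{db}$ is $1-\tfrac{n-2}{n-1}=\tfrac{1}{n-1}$ relative to $\schou$, which after the $\tfrac{1}{n-2}$ from the substitution becomes $\tfrac{1}{(n-1)(n-2)}$ in front of $\rlc_{db}$, matching the stated formula; the scalar terms come from the $\hat g_{ab}\rlc$ piece of the same substitution and carry the extra $-\tfrac{1}{2(n-1)}$ giving the $\tfrac{\rlc}{2(n-1)^2(n-2)}$-type coefficients which must be checked to agree with the $\tfrac{\rlc}{(n-1)(n-2)}$ written in \eqref{ww} (this is the one place I would double-check signs and factors of $2$ carefully).

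Finally, the first assertion of the proposition, $\hat{\Rho}_{ab}=\schou_{ab}-\tfrac{1}{(n-1)(n-2)}G_{ab}$ with $G_{ab}=\rlc_{ab}-\tfrac12\hat g_{ab}\rlc$, follows immediately from the two displayed substitutions above:
\[
\schou_{ab}-\hat{\Rho}_{ab}
=\tfrac{1}{n-2}\rlc_{ab}-\tfrac{1}{2(n-1)(n-2)}\hat g_{ab}\rlc-\tfrac{1}{n-1}\rlc_{ab}
=\tfrac{1}{(n-1)(n-2)}\big(\rlc_{ab}-\tfrac12\hat g_{ab}\rlc\big),
\]
which is exactly $\tfrac{1}{(n-1)(n-2)}G_{ab}$. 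So I would present the scalar/Schouten identity first as a quick warm-up, then feed it into the Weyl comparison. The main obstacle is purely computational: keeping the antisymmetrizations and the $\tfrac{1}{n-2}$ versus $\tfrac{1}{(n-1)(n-2)}$ coefficients straight when expanding $\hat g_{bd}\hat g^{ae}\schou_{ec}$ in terms of $\rlc$, and making sure the scalar-curvature terms that the problem statement writes with coefficient $\tfrac{\rlc}{(n-1)(n-2)}$ actually emerge with that normalization rather than $\tfrac{\rlc}{2(n-1)^2(n-2)}$ — i.e. confirming that the cross-terms from the $\delta^a_{~c}\schou_{db}$ pieces and the $\hat g_{bd}\hat g^{ae}\schou_{ec}$ pieces combine correctly. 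No deep idea is needed beyond the symmetry of $\rlc_{ab}$ and linear algebra.
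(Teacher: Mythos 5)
Your approach is exactly the paper's: the paper's (unwritten) proof is precisely the comparison of the right‐hand sides of (\ref{incop}), using $\hat{\Rho}_{ab}=\tfrac{1}{n-1}\rlc_{ab}$, the symmetry of $\rlc_{ab}$ to kill the $\delta^a_{~b}\hat{\Rho}_{[cd]}$ term, and $\schou_{ab}=\tfrac{1}{n-2}\bigl(\rlc_{ab}-\tfrac{1}{2(n-1)}\hat{g}_{ab}\rlc\bigr)$, followed by the same bookkeeping you describe. On the one coefficient you flag: no $(n-1)^2$ arises, because the scalar-curvature contribution comes in two equal halves --- $-\tfrac{1}{2(n-1)(n-2)}\rlc\,(\delta^a_{~c}\hat{g}_{db}-\delta^a_{~d}\hat{g}_{cb})$ from the $-\tfrac12\hat{g}\rlc$ part of $G$ inside $\delta^a_{~c}(\schou_{db}-\hat{\Rho}_{db})-\delta^a_{~d}(\schou_{cb}-\hat{\Rho}_{cb})$, and $-\tfrac{1}{2(n-1)(n-2)}\rlc\,(\hat{g}_{bd}\delta^a_{~c}-\hat{g}_{bc}\delta^a_{~d})$ from expanding $\hat{g}_{bd}\hat{g}^{ae}\schou_{ec}-\hat{g}_{bc}\hat{g}^{ae}\schou_{ed}$ --- which sum to the stated $\tfrac{\rlc}{(n-1)(n-2)}(\delta^a_{~d}\hat{g}_{bc}-\delta^a_{~c}\hat{g}_{bd})$.
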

In particular we have the following corollary:
\begin{corollary}\label{co222}
The projective Schouten tensor $\hat{\Rho}_{ab}$ of the Levi-Civita
connection $\hat{\nabla}$ is symmetric
$$\hat{\Rho}_{ab}=\hat{\Rho}_{ba}.$$
Moreover, the projective Weyl tensor $W^a_{~bcd}$ of \emph{any}
connection $\nabla$ from the projective class $[\nabla]$ of a 
Levi-Civita connection
satisfies
\be
\hat{g}_{ae}\hat{g}^{bc}W^e_{~bcd}=
\hat{g}_{de}\hat{g}^{bc}W^e_{~bca}.\label{icc}\ee
\end{corollary}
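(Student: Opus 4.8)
The plan is to treat the two assertions separately. The symmetry $\hat{\Rho}_{ab}=\hat{\Rho}_{ba}$ is immediate: by (\ref{rholc}) we have $\hat{\Rho}_{ab}=\tfrac{1}{n-1}\rlc_{ab}$, and the Ricci tensor $\rlc_{ab}$ of the Levi-Civita connection of a (pseudo-)Riemannian metric is symmetric. (Equivalently, this follows from Proposition \ref{pr211}, since both the metric Schouten tensor $\schou_{ab}$ and the Einstein tensor $G_{ab}$ are built from the symmetric $\rlc_{ab}$ and $\hat{g}_{ab}$; or from Remark \ref{ko} together with Corollary \ref{kor}, since the Levi-Civita connection sits in a special projective subclass.)

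For the identity (\ref{icc}) the first observation is projective invariance of the projective Weyl tensor: by the fourth line of (\ref{2tr}), $\hat{W}^a_{~bcd}=W^a_{~bcd}$ for every $\nabla\in[\nabla]$. Hence it suffices to verify (\ref{icc}) for the Levi-Civita representative $\hat{\nabla}$, i.e. to show that $\hat{g}_{ae}\hat{g}^{bc}\hat{W}^e_{~bcd}$ is symmetric in $a$ and $d$.

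I would then lower the index $a$ in formula (\ref{ww}) of Proposition \ref{pr211} with $\hat{g}$, writing $\hat{W}_{abcd}:=\hat{g}_{ae}\hat{W}^e_{~bcd}$ as the sum of the fully covariant metric Weyl tensor $\weyl_{abcd}$ and four terms, each a product of metric factors with at most one Ricci factor ($\rlc_{**}$ or $\rlc\,\hat{g}_{**}$). Contracting with $\hat{g}^{bc}$: the metric Weyl contribution drops out because $\weyl_{abcd}$ is totally trace-free — by its pair-antisymmetry, $\hat{g}^{bc}\weyl_{abcd}=-\hat{g}^{bc}\weyl_{bacd}$, which is the standard Weyl $(1,3)$-trace and vanishes; each of the remaining four contractions collapses, via $\hat{g}^{bc}\hat{g}_{bd}=\delta^c_{~d}$, $\hat{g}^{bc}\hat{g}_{bc}=n$ and $\hat{g}^{bc}\rlc_{bc}=\rlc$, to a multiple of $\rlc_{ad}$ or of $\rlc\,\hat{g}_{ad}$. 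The upshot is that $\hat{g}^{bc}\hat{W}_{abcd}$ is a linear combination, with coefficients depending only on $n$, of $\rlc_{ad}$ and $\rlc\,\hat{g}_{ad}$; since $\rlc_{ad}$ is symmetric (by the first part) and $\hat{g}_{ad}$ is symmetric, the whole expression is symmetric under $a\leftrightarrow d$, which is precisely (\ref{icc}).

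The only place requiring a moment's care is the vanishing of the $\hat{g}^{bc}$-trace of the metric Weyl tensor and keeping the signs straight through the four remaining contractions; everything else is routine index bookkeeping. As a by-product one finds $\hat{g}^{bc}\hat{W}_{abcd}=-\tfrac{n}{n-1}\big(\rlc_{ad}-\tfrac{1}{n}\rlc\,\hat{g}_{ad}\big)$, i.e. the trace-free Ricci tensor up to a constant, though this explicit value is not needed for the corollary.
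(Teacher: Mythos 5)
Your proposal is correct and follows essentially the same route as the paper: symmetry of $\hat{\Rho}_{ab}$ from the symmetric Ricci/Schouten/Einstein tensors, then contraction of formula (\ref{ww}) with $\hat{g}^{bc}$ to reduce $\hat{g}_{ae}\hat{g}^{bc}\hat{W}^e_{~bcd}$ to a combination of $\rlc_{ad}$ and $\rlc\,\hat{g}_{ad}$ (your by-product $(n-1)\hat{g}_{ae}\hat{g}^{bc}\hat{W}^e_{~bcd}=-n\rlc_{ad}+\rlc\,\hat{g}_{ad}$ is exactly the identity the paper displays), followed by projective invariance of the Weyl tensor via (\ref{2tr}).
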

\begin{proof}
The first part of the Corollary is an immediate consequence of 
the fact that the metric Schouten tensor
of the Levi-Civita connection as well as the Einstein tensor are symmetric. The second part follows
from the relation (\ref{ww}), which yields:
$$(n-1)\hat{g}_{ae}\hat{g}^{bc}\hat{W}^e_{~bcd}=-n\rlc_{ad}+\rlc \hat{g}_{ad}.$$
Since $\rlc_{ab}$ is symmetric we get $\hat{g}_{ae}\hat{g}^{bc}\hat{W}^e_{~bcd}=
\hat{g}_{de}\hat{g}^{bc}\hat{W}^e_{~bca}$. But according to the
fourth transformation law in (\ref{2tr}) the Weyl tensor is invariant
under the projective transformations,
$\hat{W}^a_{~bcd}=W^a_{~bcd}$. Thus (\ref{icc}) holds, for all
connections $\nabla$ from the projective class of $\hat{\nabla}$. 
This ends the proof.
\end{proof}
The above corollary is obviously related to the question in the title
of this Section. It gives the first, very simple, obstruction for a
projective structure $[\nabla]$ to include a Levi-Civita connection of
some metric. We reformulate it to the following theorem.
\begin{theorem}\label{so1}
A necessary condition for a projective structure $(M,[\nabla])$ to
include a connection $\hat{\nabla}$, which is the Levi-Civita
connection of some metric $\hat{g}_{ab}$, is an existence of a symmetric
nondegenerate bilinear form $g^{ab}$ on $M$, such that the Weyl tensor
$W^a_{~bcd}$ of the projective structure satisfies
\be
g_{ae}g^{bc}W^e_{~bcd}=
g_{de}g^{bc}W^e_{~bca},\label{cu}\ee
with $g_{ab}$ being the inverse of $g^{ab}$,
$g_{ac}g^{cb}=\delta^b_{~a}$. If the 
Levi-Civita connection  $\hat{\nabla}$  from the projective class
$[\nabla]$ exists, then its corresponding metric $\hat{g}_{ab}$ must be 
\emph{conformal} to the inverse $g_{ab}$ of some solution $g^{ab}$ of equation (\ref{cu}), i.e. 
$\hat{g}_{ab}={\rm e}^{2\phi}g_{ab}$, for a solution $g^{ab}$ of (\ref{cu})
and some function $\phi$ on $M$.      
\end{theorem}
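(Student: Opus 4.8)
The plan is to read the theorem off directly from Corollary~\ref{co222}, treating the present statement as the intrinsic (i.e.\ representative-free) repackaging of that corollary. Suppose $(M,[\nabla])$ contains a connection $\hat\nabla$ that is the Levi-Civita connection of a (pseudo-)Riemannian metric $\hat g=\hat g_{ab}\theta^a\theta^b$. Then I would take $g^{ab}:=\hat g^{ab}$, the inverse of $\hat g_{ab}$, which is by construction a symmetric nondegenerate bilinear form on $M$. It remains to verify that this $g^{ab}$ solves (\ref{cu}); but that is precisely the second assertion of Corollary~\ref{co222}, namely the identity (\ref{icc}) --- and since by the fourth transformation law in (\ref{2tr}) the projective Weyl tensor is representative-independent, the tensor $W^a_{~bcd}$ appearing in (\ref{icc}) coincides with the Weyl tensor of $(M,[\nabla])$ appearing in (\ref{cu}). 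Hence (\ref{cu}) holds with $g^{ab}=\hat g^{ab}$, which is the asserted necessary condition.

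For the second assertion, with the solution $g^{ab}=\hat g^{ab}$ just exhibited one has $g_{ab}=\hat g_{ab}$, so the relation $\hat g_{ab}={\rm e}^{2\phi}g_{ab}$ holds trivially with $\phi\equiv 0$; thus $\hat g$ is conformal to the inverse of a solution of (\ref{cu}). Stating the conclusion in the weaker ``conformal to'' form is deliberate and forward-looking: equation (\ref{cu}) is a pointwise, scale-invariant, purely algebraic condition on $g^{ab}$, so its solution set is a priori strictly larger than the set of metrizing metrics, and the algorithm promised in the introduction will later recover the actual $\hat g$ from a candidate solution $g^{ab}$ by fixing the conformal factor $\phi$ through additional, projectively defined integrability conditions.

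There is no genuine obstacle in this argument: the substantive work --- the explicit relation between the projective and metric Weyl/Schouten tensors --- has already been carried out in Proposition~\ref{pr211}, and its symmetry consequence in Corollary~\ref{co222}. The only point worth spelling out carefully is the passage from ``the Weyl tensor of the Levi-Civita representative'' to ``the Weyl tensor $W^a_{~bcd}$ of the projective structure'', which is justified by projective invariance, $\hat W^a_{~bcd}=W^a_{~bcd}$. I would therefore keep the proof to a few lines, invoking Corollary~\ref{co222} together with the invariance recorded in (\ref{2tr}).
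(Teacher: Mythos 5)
Your proposal is correct and follows exactly the paper's route: the paper presents Theorem~\ref{so1} explicitly as a reformulation of Corollary~\ref{co222}, whose proof rests on the identity $(n-1)\hat{g}_{ae}\hat{g}^{bc}\hat{W}^e_{~bcd}=-n\rlc_{ad}+\rlc\,\hat{g}_{ad}$ from Proposition~\ref{pr211} together with the projective invariance $\hat{W}^a_{~bcd}=W^a_{~bcd}$ from (\ref{2tr}). Your observations that $g^{ab}=\hat g^{ab}$ itself furnishes the required solution (so $\phi\equiv 0$ suffices for the second assertion) and that the ``conformal to'' phrasing anticipates the scale-invariance of the algebraic condition (\ref{cu}) are both accurate and consistent with how the paper uses this theorem later.
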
 
As an example we consider a projective structure $[\nabla]$ on a 3-dimensional
manifold $M$ parametrized by three real coordinates $(x,y,z)$. We choose a
holonomic coframe $(\theta^1,\theta^2,\theta^3)=(\der x,\der y,\der
z)$, and generete a projective structure from the
connection 1-forms 
\be
\Gamma^a_{~b}=\bma 0&a\der z&a\der y\\
b\der z&0 &b\der x\\
c\der y&c\der x&0\ema ,\quad\quad{\rm with}\quad\quad a=a(z),\quad
b=b(z),\quad c=c(z),\label{exc1}
\ee
via (\ref{pt}).

It is easy to calculate the projective Weyl forms
$W^a_{~b}$, and the projective Schouten forms $\om_b$, for this
connection. They read:
$$W^a_{~b} =\bma -\tfrac12 c'\der x\dz\der y&0&-a'\der y\dz\der z\\
0&\tfrac12 c'\der x\dz\der y&-b'\der x\dz\der z\\
-\tfrac12 c'\der y\dz\der z&-\tfrac12 c'\der x\dz\der z&0
\ema,$$
and
$$\omega_a=\bma -bc\der x+\tfrac12 c'\der y,&-ac \der y+\tfrac12 c'\der
x,&-ab\der z\ema.$$
With this information in mind it is easy to check that 
\be
g^{ab}=\bma-f a'&g^{12}&0\\g^{12}&-f b'&0\\0&0&g^{33}
\ema,\label{exce}\ee
with some undetermined functions $f=f(x,y,z)$,
$g^{12}=g^{12}(x,y,z)$, $g^{33}=g^{33}(x,y,z)$, satisfies (\ref{cu}). Thus the connection $\Gamma^a_{~b}$ may, in
principle, be the Levi-Civita connection of some metric $\hat{g}_{ab}$. Accordig to
Theorem \ref{so1} we may expect that the inverse of this $g^{ab}$ is proportional to $\hat{g}_{ab}$. 
\subsection{Comparing natural projective and (pseudo)Riemannian tensors}
Proposition \ref{pr211} in an obvious way implies the following
corollary:
\begin{corollary}
The Levi-Civita connection $\hat{\nabla}$ of a metric $\hat{g}_{ab}$
has its projective Schouten tensor equal to the Levi-Civita one,
$\hat{\Rho}_{ab}=\schou_{ab}$, if and only if its Einstein (hence the
Ricci) tensor vanishes. If this happens $\hat{\Rho}_{ab}\equiv 0$, and
both the projective and the Levi-Civita Weyl tensors are equal,
$\hat{W}^a_{~bcd}=\weyl^a_{~bcd}$. 
\end{corollary}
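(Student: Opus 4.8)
The plan is to read everything off Proposition~\ref{pr211}, whose first displayed identity
$$\hat{\Rho}_{ab}=\schou_{ab}-\frac{1}{(n-1)(n-2)}G_{ab}$$
already isolates the precise difference between the projective and the metric Schouten tensors. First I would note that, since $(n-1)(n-2)\neq 0$ for $n\geq 3$, the equality $\hat{\Rho}_{ab}=\schou_{ab}$ holds if and only if the Einstein tensor $G_{ab}=\rlc_{ab}-\tfrac12\hat{g}_{ab}\rlc$ vanishes identically. This gives the ``if and only if'' clause at once.

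Next I would check that for $n\geq 3$ the vanishing of $G_{ab}$ is equivalent to the vanishing of $\rlc_{ab}$. One implication is immediate; for the converse, contract $G_{ab}=0$ with $\hat{g}^{ab}$ to obtain $\rlc-\tfrac{n}{2}\rlc=(1-\tfrac{n}{2})\rlc=0$, hence $\rlc=0$ because $n\neq 2$, and feeding this back into $G_{ab}=0$ yields $\rlc_{ab}=0$. This is the only place the standing hypothesis $n\geq 3$ enters, and it is the one step that deserves a moment's care.

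Finally, assuming $\rlc_{ab}\equiv 0$ (hence also $\rlc\equiv 0$), the remaining assertions follow by inspection of the identities already recorded: from~(\ref{rholc}) we get $\hat{\Rho}_{ab}=\tfrac{1}{n-1}\rlc_{ab}=0$, while $\schou_{ab}=0$ as well, so the two are trivially equal; and in the Weyl formula~(\ref{ww}) of Proposition~\ref{pr211} every correction term to $\weyl^a_{~bcd}$ is built solely from $\rlc_{ab}$ or $\rlc$, so all of them drop out and $\hat{W}^a_{~bcd}=\weyl^a_{~bcd}$. There is no genuine obstacle here: the corollary is purely a matter of tracing through the relations between the projective and the Levi-Civita curvature quantities established in Proposition~\ref{pr211} and equation~(\ref{rholc}); the only subtlety worth flagging is the implicit use of $n\geq 3$ in the equivalence ``Einstein tensor vanishes $\Leftrightarrow$ Ricci tensor vanishes.''
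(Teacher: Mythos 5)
Your proposal is correct and follows exactly the route the paper intends: the paper offers no separate argument, stating only that the corollary follows ``in an obvious way'' from Proposition~\ref{pr211}, and your write-up simply makes that explicit (the trace argument showing $G_{ab}=0\Leftrightarrow \rlc_{ab}=0$ for $n\geq 3$, then reading off the vanishing of $\hat{\Rho}_{ab}$ and of the correction terms in \nn{ww}). Nothing is missing; flagging the implicit use of $n\geq 3$ is a sensible touch.
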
 
Now we answer the question if there are Ricci non-flat metrics having
equal projective an Levi-Civita Weyl tensors. We use (\ref{ww}). The
requirement that $\hat{W}^a_{~bcd}=\weyl^a_{~bcd}$ yields the
following proposition.
\begin{proposition}
The Levi-Civita connection $\hat{\nabla}$ of a metric $\hat{g}_{ab}$
has its projective Weyl tensor equal to the Levi-Civita one,
$\hat{W}^a_{~bcd}=\weyl^a_{~bcd}$, if and only if its Levi-Civita Ricci
tensor satisfies
\be M_{abcd}^{\quad\,\,\,ef}~\rlc_{ef}=0,\label{cwe}\ee
where
$$M_{abcd}^{\quad\,\,\,ef}=\hat{g}_{ac}\delta^e_{~d}\delta^f_{~b}-\hat{g}_{ad}\delta^e_{~c}\delta^f_{~b}+\hat{g}_{ad}\hat{g}_{cb}\hat{g}^{ef}-\hat{g}_{ac}\hat{g}_{db}\hat{g}^{ef}+(n-1)(\hat{g}_{bd}\delta^e_{~a}\delta^f_{~c}-\hat{g}_{bc}\delta^e_{~a}\delta^f_{~d}).$$
\end{proposition}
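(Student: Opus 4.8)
The plan is to read the statement off directly from the relation (\ref{ww}) of Proposition \ref{pr211}. Write
$$\Delta^a_{~bcd}:=\hat{W}^a_{~bcd}-\weyl^a_{~bcd},$$
so that the asserted equivalence is just the claim that $\Delta^a_{~bcd}=0$ precisely when $M_{abcd}^{\quad\,\,\,ef}\rlc_{ef}=0$. By (\ref{ww}), $\Delta^a_{~bcd}$ is a fixed universal linear combination of the Levi-Civita Ricci tensor $\rlc_{ab}$ and of its metric trace $\rlc=\hat{g}^{ab}\rlc_{ab}$; in particular $\hat{W}^a_{~bcd}=\weyl^a_{~bcd}$ is an \emph{algebraic} condition on $\rlc_{ab}$ alone.

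The key step is a routine index manipulation. Since $n\geq 3$ the factor $(n-1)(n-2)$ is nonzero, so $\Delta^a_{~bcd}=0$ is equivalent to the vanishing of $(n-1)(n-2)\Delta^a_{~bcd}$, which by (\ref{ww}) equals
$$(n-1)\big(\hat{g}_{bd}\hat{g}^{ae}\rlc_{ec}-\hat{g}_{bc}\hat{g}^{ae}\rlc_{ed}\big)+\big(\delta^a_{~c}\rlc_{db}-\delta^a_{~d}\rlc_{cb}\big)+\rlc\,\big(\delta^a_{~d}\hat{g}_{bc}-\delta^a_{~c}\hat{g}_{bd}\big).$$
Lowering the contravariant index $a$ with $\hat{g}_{ia}$, relabelling $i\to a$, and rewriting $\rlc=\hat{g}^{ef}\rlc_{ef}$ in the last pair of terms, each of the six summands turns into exactly one of the six terms of $M_{abcd}^{\quad\,\,\,ef}\rlc_{ef}$ with the $M$ displayed in the statement; conversely, raising the index $a$ with the inverse metric $\hat{g}^{ia}$ recovers $(n-1)(n-2)\Delta^a_{~bcd}$ from $M_{abcd}^{\quad\,\,\,ef}\rlc_{ef}$. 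Since $\hat{g}$ is nondegenerate, the lowered and the original expression vanish simultaneously, and this establishes both implications at once.

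The only points demanding care are bookkeeping ones: keeping track of which single index is lowered (resp.\ raised), and remembering that the symbol $\rlc$ appearing in (\ref{ww}) is precisely the $\hat{g}$-trace $\hat{g}^{ef}\rlc_{ef}$, so that the scalar-curvature terms become the two $\hat{g}^{ef}$-terms in $M$. There is no analytic content and no genuine obstacle here; the proposition is an exact covariant rewriting of the identity (\ref{ww}).
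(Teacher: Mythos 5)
Your proposal is correct and is essentially the paper's own argument: the paper derives the proposition by exactly this route, namely imposing $\hat{W}^a_{~bcd}=\weyl^a_{~bcd}$ in the relation (\ref{ww}) and rewriting the resulting algebraic condition on $\rlc_{ab}$. Your index check goes through — lowering $a$ with $\hat{g}_{ia}$ in $(n-1)(n-2)(\hat{W}^a_{~bcd}-\weyl^a_{~bcd})$ reproduces term by term the six summands of $M_{abcd}^{\quad\,\,\,ef}\rlc_{ef}$, and nondegeneracy of $\hat{g}$ gives the equivalence in both directions.
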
 
One easilly checks that the \emph{Einstein metrics}, i.e. the metrics for which 
$$\rlc_{ab}=\Lambda \hat{g}_{ab},$$ satisfy
(\ref{cwe}). Therefore we have the following corollary:
\begin{corollary}\label{wni}
The projective and the Levi-Civita Weyl tensors of \emph{Einstein}
metrics are equal. In particular, all conformally flat Einstein
metrics (metrics of constant curvature) are projectively equivalent.  
\end{corollary}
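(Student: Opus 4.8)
The plan is to verify the algebraic condition \eqref{cwe} directly on Einstein metrics, and then read off the geometric consequence from the projective flatness theorem quoted above.

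\textbf{Step 1 (the equality of Weyl tensors).} By the Proposition immediately preceding this Corollary, $\hat{W}^a_{~bcd}=\weyl^a_{~bcd}$ holds precisely when $M_{abcd}^{\quad\,\,\,ef}\rlc_{ef}=0$. For an Einstein metric I substitute $\rlc_{ef}=\Lambda\hat{g}_{ef}$ into the explicit expression for $M_{abcd}^{\quad\,\,\,ef}$. Each of the six terms, after contraction with $\hat{g}_{ef}$, becomes a multiple of either $\hat{g}_{ac}\hat{g}_{bd}$ or $\hat{g}_{ad}\hat{g}_{bc}$; the two $\hat{g}^{ef}$ terms pick up the trace factor $\hat{g}^{ef}\hat{g}_{ef}=n$. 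Collecting coefficients, the multiple of $\hat{g}_{ac}\hat{g}_{bd}$ is $\Lambda(1-n+(n-1))=0$ and that of $\hat{g}_{ad}\hat{g}_{bc}$ is $\Lambda(-1+n-(n-1))=0$. Hence $M_{abcd}^{\quad\,\,\,ef}\rlc_{ef}=0$, so the first assertion follows. (Equivalently one can plug $\rlc_{ab}=\Lambda\hat{g}_{ab}$ straight into \eqref{ww} and watch every metric term cancel; the index bookkeeping is the only thing that needs attention, and there is no genuine obstacle here.)

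\textbf{Step 2 (the "in particular" clause).} A conformally flat metric has vanishing metric Weyl tensor, $\weyl^a_{~bcd}\equiv 0$; if moreover it is Einstein, Step 1 gives $\hat{W}^a_{~bcd}=\weyl^a_{~bcd}\equiv 0$, so the projective Weyl tensor of $[\nabla]$ vanishes identically. By the projective flatness theorem stated above, in dimension $n\geq 3$ this makes $(M,[\nabla])$ locally projectively equivalent to the flat projective structure; as this is true for every conformally flat Einstein metric, any two of them are locally projectively equivalent to each other. In dimension $n=2$ the metric Weyl tensor carries no information, but "Einstein" forces constant Gaussian curvature, and then \eqref{rholc} gives $\hat{\Rho}_{ab}=\tfrac{\Lambda}{n-1}\hat{g}_{ab}$, which is parallel for $\hat{\nabla}$, so the projective Cotton tensor $Y_{bca}=2\nabla_{[b}\hat{\Rho}_{c]a}$ vanishes and the flatness theorem applies again. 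Thus in all dimensions the constant curvature metrics carry one and the same (flat) projective structure, which is exactly the claim.

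The single subtlety worth flagging is the $n=2$ case of the second statement, where the obstruction to projective flatness is the Cotton tensor rather than the Weyl tensor; everything else reduces to the short contraction in Step 1.
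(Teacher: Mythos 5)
Your proof is correct and follows the same route the paper intends: the contraction of $M_{abcd}^{\quad\,\,\,ef}$ with $\Lambda\hat{g}_{ef}$ cancels exactly as you compute, and the ``in particular'' clause is read off from the vanishing of the metric (hence projective) Weyl tensor together with the projective flatness theorem. You merely fill in details the paper leaves as ``one easily checks,'' including the $n=2$ case via the Cotton tensor, which is a welcome but not essentially different addition.
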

It is interesting to know if there are non-Einstein metrics satisfying
condition (\ref{cwe}). 
\subsection{Formulation a'la Roger Liouville}
If $\nabla$ is in the projective class of the Levi-Civita connection
$\hat{\nabla}$ of a metric $\hat{g}$ we have: 
$$0=\hat{\D}\hat{g}_{ab}=\D\hat{g}_{ab}-2A\hat{g}_{ab}-A_a\theta^c\hat{g}_{cb}-A_b\theta^c\hat{g}_{ac},$$
for some 1-form $A=A_a\theta^a$. Thus the condition that a 
torsionless connection $\nabla$ is projectively equivalent to the 
Levi-Civita connection of some metric, is equivalent to the existence
of a pair $(\hat{g}_{ab},A_a)$ such that 
$$\D\hat{g}_{ab}=2A\hat{g}_{ab}+\theta^c(A_a\hat{g}_{cb}+A_b\hat{g}_{ac}),$$
with an \emph{invertible} symmetric tensor $\hat{g}_{ab}$. Dually this
last means that  
a 
torsionless connection $\nabla$ is projectively equivalent to a
Levi-Civita connection of some metric, iff there exists 
a pair $(\hat{g}^{ab},A_a)$ such that
\be 
\D\hat{g}^{ab}=-2A\hat{g}^{ab}-A_c(\theta^b\hat{g}^{ca}+\theta^a\hat{g}^{cb}),\label{obc}
\ee
with an invertible $\hat{g}^{ab}$. 

The unknown $A$ can be easilly eliminated from these equations by
contracting with the inverse $\hat{g}_{ab}$:
$$A=-\frac{\hat{g}_{ab}\D\hat{g}^{ab}}{2(n+1)},$$
so that the `if an only if' condition for $\nabla$ to be in a
projective class of a Levi-Civita connection $\hat{\nabla}$ is the 
existence of $\hat{g}^{ab}$ such that 
$$2(n+1)\D\hat{g}^{ab}=2(\hat{g}_{cd}\D\hat{g}^{cd})\hat{g}^{ab}+(\hat{g}_{ef}\nabla_c\hat{g}^{ef})(\theta^b\hat{g}^{ca}+\theta^a\hat{g}^{cb}),\quad\quad
\hat{g}_{ac}\hat{g}^{cb}=\delta^b_{~c}.$$
This is an unpleasent to analyse, \emph{nonlinear} system of PDEs,  
for the unknown $\hat{g}^{ab}$. It follows that it is more convenient
to discuss the equivalent system (\ref{obc}) for the unknowns $(\hat{g}^{ab},A_a)$, which we will do in the following. 

The aim of this subsection is to prove the following theorem:
\begin{theorem}\label{tth}
A torsionless connection $\tilde{\nabla}$ on an $n$-dimensional
manifold $M$ is projectively equivalent to a
Levi-Civita connection $\hat{\nabla}$ of a metric $\hat{g}_{ab}$ if
and only if its projective class $[\tilde{\nabla}]$ contains a
\emph{special} projective subclass $[\nabla]$ whose connections $\nabla$ satisfy the
following:
for every $\nabla\in[\nabla]$ there exists a nondegenerate symmetric  tensor $g^{ab}$ and a vector field
$\mu^a$ on $M$ such that  
$$\nabla_c g^{ab}=\mu^a\delta^b_{~c}+\mu^b\delta^a_{~c},$$
or what is the same:
\be 
\D g^{ab}=\mu^a\theta^b+\mu^b\theta^a.\label{obc1}\ee   
\end{theorem}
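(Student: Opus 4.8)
The plan is to tie the condition (\ref{obc1}) to the Roger--Liouville-type criterion (\ref{obc}) already derived, passing between connections of the projective class by conformally rescaling the candidate cometric $g^{ab}$, and using the special-subclass normalization to control the trace part of the connection. The forward implication is essentially bookkeeping; the content is in the converse.

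\smallskip
\noindent\textbf{Metric $\Rightarrow$ condition.} Suppose $\tilde\nabla$ lies in the projective class of the Levi-Civita connection $\hat\nabla$ of a metric $\hat g$, so $[\tilde\nabla]=[\hat\nabla]$. By Corollary \ref{co222} the projective Schouten tensor of $\hat\nabla$ is symmetric, hence $\hat\nabla$ belongs to a special projective subclass $[\nabla]$, whose members are exactly the connections obtained from $\hat\nabla$ by a projective change (\ref{pt}) with $A=\der\phi$ a gradient (by the Corollary characterizing special subclasses). For such a $\nabla$ I would set $g^{ab}=e^{-2\phi}\hat g^{ab}$ and $\mu^a=g^{ab}\partial_b\phi$; then, using $\hat\D\hat g^{ab}=0$ and the transformation law (\ref{2tr}) for connection coefficients, a one-line computation yields $\nabla_c g^{ab}=\mu^a\delta^b_{~c}+\mu^b\delta^a_{~c}$, and $g^{ab}$ is nondegenerate because $\hat g^{ab}$ is. This verifies (\ref{obc1}) for \emph{every} member of the special subclass.

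\smallskip
\noindent\textbf{Condition $\Rightarrow$ metric.} For the converse I would use the hypothesis for a single $\nabla$ in the special subclass $[\nabla]\subset[\tilde\nabla]$, with its nondegenerate symmetric $g^{ab}$ and vector $\mu^a$; write $g_{ab}$ for the inverse, $\mu_a=g_{ab}\mu^b$, and $\mu=\mu_a\theta^a$. The crux is that $\mu$ is \emph{closed}. Contracting (\ref{obc1}) with $g_{ab}$ gives $2\mu_c$ on the right-hand side, while $g_{ab}\nabla_c g^{ab}=\partial_c\log|\det(g^{ab})|+2\Gamma^a_{~ac}$ on the left, so $2\mu=\der\log|\det(g^{ab})|+2\Gamma^a_{~a}$. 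Now $\Gamma^a_{~a}$ is closed: since $\nabla$ lies in a special subclass, Corollary \ref{kor} gives $\Omega^a_{~a}=0$, while by (\ref{c12}) one has $\Omega^a_{~a}=\der\Gamma^a_{~a}+\Gamma^a_{~c}\dz\Gamma^c_{~a}=\der\Gamma^a_{~a}$. Hence $\mu$ is a sum of an exact and a closed form, $\der\mu=0$, and by the Poincar\'e lemma $\mu=\der\psi$ locally. Then, letting $\bar\nabla$ be obtained from $\nabla$ by the change (\ref{pt}) with $A=-\der\psi$ (a torsionless connection in $[\tilde\nabla]$), the same transformation computation as above gives
\[
\bar\nabla_c\!\big(e^{2\psi}g^{ab}\big)=e^{2\psi}\Big(\delta^b_{~c}\big(\mu^a-g^{ad}\partial_d\psi\big)+\delta^a_{~c}\big(\mu^b-g^{bd}\partial_d\psi\big)\Big)=0,
\]
since $\mu^a=g^{ab}\mu_b=g^{ab}\partial_b\psi$. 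Thus $\bar\nabla$ parallelizes the nondegenerate symmetric cometric $e^{2\psi}g^{ab}$, hence is the Levi-Civita connection of the (pseudo-)Riemannian metric $e^{-2\psi}g_{ab}$, which is projectively equivalent to $\tilde\nabla$; this completes the argument.

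\smallskip
\noindent\textbf{Main obstacle.} Everything except the closedness of $\mu$ is manipulation of the projective transformation law. The step that makes the closedness cheap is the identity $\der\Gamma^a_{~a}=0$, valid precisely in a special subclass, which makes the trace of (\ref{obc1}) integrable on its own and lets one avoid differentiating (\ref{obc1}) and invoking the Bianchi identities. The points to check carefully are that the conformal rescalings keep $g^{ab}$ nondegenerate and that projective changes preserve torsion-freeness, so that $\bar\nabla$ is genuinely a Levi-Civita connection.
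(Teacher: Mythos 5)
Your proof is correct and follows the same overall architecture as the paper's: the forward direction conformally rescales $\hat g^{ab}$ by the gradient potential labelling the member of the special subclass, and the converse hinges on showing that the 1-form $\mu=g_{ab}\mu^b\theta^a$ (the paper's $-A$) is closed, so that a gradient projective change turns $\nabla$ into a connection preserving a rescaled metric. The one place where you genuinely diverge is the closedness argument: the paper differentiates $A=-\tfrac12 g_{ab}\D g^{ab}$ and invokes the Ricci identity $\D^2 g^{ab}=\Omega^a_{~c}g^{cb}+\Omega^b_{~c}g^{ac}$ together with the dual relation for $\D g_{ab}$, whereas you trace (\ref{obc1}) against $g_{ab}$ to obtain $2\mu=\der\log|\det(g^{ab})|+2\Gamma^a_{~a}$ and then use $\der\Gamma^a_{~a}=\Omega^a_{~a}=0$ (Corollary \ref{kor}, plus the vanishing of the trace of $\Gamma^a_{~c}\dz\Gamma^c_{~a}$). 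Both routes ultimately rest on the tracelessness of the curvature in a special subclass; yours is slightly more elementary in that it avoids the second covariant differential and the cancellation of the quadratic term $\D g_{ab}\dz\D g^{ab}$, at the cost of using the frame-dependent determinant identity. Your sign conventions ($g^{ab}={\rm e}^{-2\phi}\hat g^{ab}$, $\mu^a=+g^{ab}\partial_b\phi$, versus the paper's $g^{ab}={\rm e}^{2\phi}\hat g^{ab}$, $\mu^a=-g^{ab}\partial_b\phi$) are the mirror image of the paper's; this is only the choice of direction in (\ref{pt}) and is harmless.
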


\begin{proof}
If $\hat{\nabla}$ is the Levi-Civita connection of a metric
$\hat{g}=\hat{g}_{ab}\theta^a\theta^b$, we consider 
connections $\nabla$ associated with $\hat{\nabla}$ via (\ref{prst}),
in which $A=\der\phi$, with arbitrary functions (potentials) on
$M$. This is a \emph{special} class of connections, since the
projective Schouten tensor $\hat{\Rho}_{ab}$ for $\hat{\nabla}$ is 
\emph{symmetric} (see Corollary \ref{co222}), and the transformation
(\ref{2tr}) with gradient $A$s, preserves the symmetry of the
projective Schouten tensor (see Remark \ref{ko}). 

Any connection $\nabla$ from this special class satisfies (\ref{obc})
with $A=\der \phi$, and therefore is characterized by the potential
$\phi$, $\nabla=\nabla(\phi)$.
 
We now take the inverse $\hat{g}^{ab}$ of the metric $\hat{g}^{ab}$,
$\hat{g}_{ac}\hat{g}^{cb}=\delta^b_{~c}$, and \emph{rescale} it to 
$$g^{ab}={\rm e}^{2f}\hat{g}^{ab},$$
where $f$ is a function on $M$. Using (\ref{obc}) with $A=\der\phi$, after a short
algebra, we get: 
$$\D g^{ab}=-2(\der\phi-\der f)g^{ab}-(\nabla_c\phi)(\theta^bg^{ca}+\theta^ag^{cb}).$$ 
Thus taking $$f=\phi+ {\rm const},$$ for each $\nabla=\nabla(\phi)$
from the special class $[\nabla]$, we associate $g^{ab}={\rm
  e}^{2f}\hat{g}^{ab}$ satisfying 
$$\D g^{ab}=-(\nabla_c\phi)(\theta^bg^{ca}+\theta^ag^{cb}).$$ 
Defining $\mu^a=-A_c g^{ca}=-{\rm e}^{2f}(\nabla_c\phi)\hat{g}^{ca}$ we get
(\ref{obc1}). Obviously $g^{ab}$ is symmetric and nondegenerate since 
$\hat{g}^{ab}$ was. 

The proof in the opposite direction is as follows:

We start with $(\nabla,g^{ab},\mu^a)$ satisfying (\ref{obc1}). In
particular, connection $\nabla$ is \emph{special}, i.e. it has symmetric
projective Schouten tensor and, by Corollary (\ref{kor}), its curvature satisfies
$$\Omega^a_{~a}=0.$$ 

Since $g^{ab}$
is invertible, we have a symmetric $g_{ab}$ such that
$g_{ac}g^{cb}=\delta^b_{~a}$. We define 
\be
A=-g_{ab}\mu^b\theta^a.\label{aa}\ee
Contracting with (\ref{obc1}) we get:
$$g_{ab}\D g^{ab}=-2A,\quad\quad {\rm or}\quad\quad A=-\tfrac12
g_{ab}\D g^{ab}.$$
Now this last equation implies that:
$$\der A=-\tfrac12\D g_{ab}\dz\D g^{ab}-\tfrac12 g_{ab}\D^2 g^{ab}.$$
This compared with the Ricci identity $\D^2g^{ab}=\Omega^a_{~c}g^{cb}+\Omega^b_{~c}g^{ac}$, the defining equation 
(\ref{obc}), and its dual 
$$\D g_{ab}=-g_{ac}g_{bd}(\mu^c\theta^d+\mu^d\theta^c),$$
yields $$\der A=-\Omega^a_{~a}=0.$$
Thus the 1-form $A$ defined by (\ref{aa}) is \emph{locally a gradient}
of a function $\phi_0$ on $M$, $A=\der\phi_0$.  
The potential $\phi_0$ is defined by $(\nabla,g^{ab},\mu^a)$ up to
$\phi_0\to\phi=\phi_0+{\rm const}$, $$A=\der\phi.$$ 
We use it to rescale the inverse 
$g_{ab}$ of $g^{ab}$. We define 
$$\hat{g}_{ab}={\rm e}^{2\phi}g_{ab}.$$
This is a nondegenerate symmetric tensor on $M$. 

Using our definitions we finally get
$$\begin{aligned}
&\D \hat{g}_{ab}=\\
&2\der\phi\hat{g}_{ab}-{\rm
    e}^{2\phi}g_{ac}g_{bd}(\mu^c\theta^d+\mu^d\theta^c)=\\
&2A\hat{g}_{ab}+A_a\hat{g}_{bc}\theta^c+A_b\hat{g}_{ac}\theta^c.
\end{aligned}
$$
This means that the new torsionless connection $\hat{\nabla}$ defined
by (\ref{prst}), with $A$ as above, satisfies  
$$\hat{\D}\hat{g}_{ab}=\D
\hat{g}_{ab}-2A\hat{g}_{ab}-A_a\hat{g}_{bc}\theta^c-A_b\hat{g}_{ac}\theta^c=0,$$ 
and thus is the Levi-Civita connection
for a metric $\hat{g}=\hat{g}_{ab}\theta^a\theta^b$. Since
$A=\der\phi$ this shows that in the special projective class defined
by $\nabla$ there is a Levi-Civita connection $\hat{\nabla}$. This
finishes the proof.  
\end{proof} 
 We also have the following corollary, which can be traced back to 
Roger Liouville \cite{liu}, (see also \cite{bde,ema,mik,sin}):  
\begin{corollary}
A projective structure $[\hat{\nabla}]$ on $n$-dimensional manifold $M$
contains a Levi-Civita connection of some 
metric if and only if at least one special connection $\nabla$ in
$[\hat{\nabla}]$  
admits a solution to the equation 
\be
\nabla_cg^{ab}-\frac{1}{n+1}\delta^a_{~c}\nabla_dg^{bd}-\frac{1}{n+1}\delta^b_{~c}\nabla_dg^{ad}=0.\label{mai}
\ee
with a symmetric and nondegenerate tensor $g^{ab}$.
\end{corollary}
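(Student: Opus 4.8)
The plan is to read off this corollary directly from Theorem \ref{tth}, using the elementary observation that equation (\ref{mai}) says exactly that $\nabla_c g^{ab}$ has the ``pure trace'' shape $\mu^a\delta^b_{~c}+\mu^b\delta^a_{~c}$ which appears in (\ref{obc1}), with the vector field $\mu^a$ reconstructed as the appropriate trace of $\nabla_c g^{ab}$.

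First I would record the following purely algebraic fact. Let $T_c{}^{ab}=T_c{}^{ba}$ be any tensor symmetric in its two upper indices. Then $T_c{}^{ab}-\tfrac{1}{n+1}\delta^a_{~c}T_d{}^{bd}-\tfrac{1}{n+1}\delta^b_{~c}T_d{}^{ad}=0$ if and only if there exists a vector field $\mu^a$ with $T_c{}^{ab}=\mu^a\delta^b_{~c}+\mu^b\delta^a_{~c}$. Indeed, given the latter one contracts to get $T_d{}^{ad}=(n+1)\mu^a$ and substitutes; conversely, given the former one sets $\mu^a:=\tfrac{1}{n+1}T_d{}^{ad}$, and this is consistent because contracting the resulting identity reproduces $T_d{}^{ad}=(n+1)\mu^a$ (using symmetry of $T$ in the upper pair). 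Applying this with $T_c{}^{ab}=\nabla_c g^{ab}$ — which is symmetric in $\{ab\}$ since $g^{ab}$ is, covariant differentiation commuting with symmetrization of the non-differentiated slots — we conclude: a \emph{special} connection $\nabla$ admits a symmetric nondegenerate solution $g^{ab}$ of (\ref{mai}) if and only if it admits a symmetric nondegenerate $g^{ab}$ together with a vector field $\mu^a$ obeying $\D g^{ab}=\mu^a\theta^b+\mu^b\theta^a$, i.e. (\ref{obc1}).

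Granting this equivalence, the corollary follows. For the ``only if'' direction, if $[\hat\nabla]$ contains the Levi-Civita connection of some metric, then Theorem \ref{tth} produces a special subclass $[\nabla]\subseteq[\hat\nabla]$ each of whose members satisfies (\ref{obc1}) for a suitable symmetric nondegenerate $g^{ab}$ and some $\mu^a$; by the equivalence above every such $g^{ab}$ then solves (\ref{mai}), so at least one (in fact every) special connection in $[\hat\nabla]$ admits such a solution. For the ``if'' direction, suppose some special $\nabla\in[\hat\nabla]$ admits a symmetric nondegenerate solution $g^{ab}$ of (\ref{mai}); setting $\mu^a:=\tfrac{1}{n+1}\nabla_d g^{ad}$ gives a triple $(\nabla,g^{ab},\mu^a)$ satisfying (\ref{obc1}) with $\nabla$ special, which is precisely the input consumed by the reverse half of the proof of Theorem \ref{tth}, yielding a metric $\hat g_{ab}$ whose Levi-Civita connection lies in $[\nabla]\subseteq[\hat\nabla]$. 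The only point requiring a little care is the interface with the phrasing of Theorem \ref{tth}, which quantifies over \emph{all} connections of the special subclass: in the forward direction this is harmless (all of them work), and in the reverse direction one notes, as is visible in the proof of Theorem \ref{tth}, that the construction of $\hat g_{ab}$ uses only a \emph{single} special $\nabla$ equipped with $(g^{ab},\mu^a)$. Beyond this bookkeeping the argument is just the trace identity above, so I do not expect any serious obstacle.
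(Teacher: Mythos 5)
Your proof is correct and follows essentially the same route as the paper's: both reduce (\ref{mai}) to (\ref{obc1}) via the trace identity $\mu^a=\tfrac{1}{n+1}\nabla_d g^{ad}$ and then invoke Theorem \ref{tth}. The only divergence is the bookkeeping around the ``for every $\nabla$ in the special subclass'' quantifier, which the paper settles by rescaling the solution to $\hat g^{ab}={\rm e}^{-2\phi}g^{ab}$ for any other special connection, while you instead observe that the reverse half of the proof of Theorem \ref{tth} consumes only a single triple $(\nabla,g^{ab},\mu^a)$; both resolutions are valid.
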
 
\begin{proof}
We use Theorem \ref{tth}. 

If $(\nabla,g^{ab},\mu^a)$ satisfies
(\ref{obc1}) it is a simple calculation to show that (\ref{mai})
holds. 

The other way around: if (\ref{mai}) holds for a special connection
$\nabla$ and an invertible $g^{ab}$, then defining $\mu^a$ by 
$\mu^a=\frac{1}{n+1}\nabla_dg^{ad}$ we get $\nabla_c
g^{ab}=\mu^a\delta^b_{~c}+\mu^b\delta^a_{~c}$, i.e. the equation
(\ref{obc1}), 
after contracting with $\theta^c$. Now, if we take any other special
connection $\hat{\nabla}$, then it is related to 
$\nabla$ via
$\hat{\nabla}_X(Y)=\nabla_X(Y)+X(\phi)Y+Y(\phi)X$. Rescalling the
$g^{ab}$ to 
$\hat{g}^{ab}={\rm e}^{-2\phi}g^{ab}$ one checks that 
$\hat{\nabla}_c\hat{g}^{ab}-\frac{1}{n+1}\delta^a_{~c}\hat{\nabla}_d\hat{g}^{bd}-\frac{1}{n+1}\delta^b_{~c}\hat{\nabla}_dg^{ad}=0$. Thus
in any special connection $\hat{\nabla}$ we find an invertible
$\hat{g}^{ab}={\rm e}^{-2\phi}g^{ab}$ with
$\hat{\mu}^a=\tfrac{1}{n+1}\hat{\nabla}_d\hat{g}^{ad}$ satisfying
$\hat{\nabla}_c\hat{g}^{ab}=\hat{\mu}^a\delta^b_{~c}+\hat{\mu}^b\delta^a_{~c}$. 
\end{proof}
\begin{remark}
It is worthwhile to note that $\mu^a$ and $\mu^b$ as in the above
proof are realte by
$$\hat{\mu}^a={\rm
  e}^{-2\phi}(\mu^a+ g^{da}\nabla_d\phi).$$
\end{remark}

\subsection{Prolongation and obstructions}
In this section, given a projective structure $[\nabla]$, we restrict
it to a corresponding \emph{special} projective subclass. All the
calculations below, are performed assuming that $\nabla_a$ is in this
special projective subclass.

We will find consequences of the neccessary and sufficient conditions
(\ref{obc1}) for this special class to include a Levi-Civita
connection.

Applying $\D$ on both sides  of (\ref{obc1}), and using the Ricci
identity (\ref{ri}) we get as a consequence:
\be
\Omega^b_{~a}g^{ac}+\Omega^c_{~a}g^{ba}=\D\mu^c\dz\theta^b+\D\mu^b\dz\theta^c.\label{pa}\ee
This expands to the following tensorial equation:
\be
\delta^b_{~d}\nabla_a\mu^c-\delta^b_{~a}\nabla_d\mu^c+\delta^c_{~d}\nabla_a\mu^b-\delta^c_{~a}\nabla_d\mu^b=R^b_{~ead}g^{ec}+R^c_{~ead}g^{be}.\label{paw}\ee
Now contracting this equation in $\{ac\}$ we get:
\be
\nabla_a\mu^b=\delta^b_{~a}\rho-\Rho_{ac}g^{bc}-\frac{1}{n}W^b_{~cda}g^{cd}
\ee
with some function $\rho$ on $M$. This is the prolonged equation
(\ref{obc1}). It can be also written as:
\be
\D\mu^b=\rho\theta^b-\omega_cg^{bc}-\frac{1}{n}W^b_{~cda}g^{cd}\theta^a.\label{wai}
\ee
Applying $\D$ on both sides of this equation, after some
manipulations, one gets the equation for the function $\rho$:
\be
\nabla_a\rho=-2\Rho_{ab}\mu^b+\frac{2}{n}Y_{abc}g^{bc}.\ee 
This is the last prolonged equation implied by (\ref{obc1}). It can be
also written as:
\be
\D\rho=-2\omega_b\mu^b+\frac{2}{n}Y_{abc}g^{bc}\theta^a.\label{vai}\ee

Thus we have the following thoerem \cite{ema}:
\begin{theorem}\label{sto}
The equation (\ref{mai}) admits a solution for $g^{ab}$ if and
only if the following system
\be\begin{aligned}
&\D g^{bc}=\mu^c\theta^b+\mu^b\theta^c\\
&\D\mu^b=\rho\theta^b-\omega_cg^{bc}-\frac{1}{n}W^b_{~cda}g^{cd}\theta^a\\
&\D\rho=-2\omega_b\mu^b+\frac{2}{n}Y_{abc}g^{bc}\theta^a,
\end{aligned}\label{sy}
\ee
has a solution for $(g^{ab},\mu^c,\rho)$. 
\end{theorem}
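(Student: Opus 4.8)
The plan is to establish the equivalence stated in Theorem \ref{sto} by a standard prolongation argument, where the forward direction amounts to re-deriving the prolonged system that was just worked out in the text, and the backward direction is essentially automatic. First I would prove the ``only if'' part: assuming a nondegenerate symmetric $g^{ab}$ solves (\ref{mai}), I set $\mu^a=\frac{1}{n+1}\nabla_dg^{ad}$, so that (\ref{mai}) becomes exactly the first equation $\D g^{bc}=\mu^c\theta^b+\mu^b\theta^c$ of the system (\ref{sy}). Then I apply $\D$ to this equation, invoke the Ricci identity (\ref{ri}) to get (\ref{pa})--(\ref{paw}), contract in $\{ac\}$ using the decomposition $\Omega^a_{~b}=W^a_{~b}+\theta^a\dz\omega_b$ valid in the special class (Corollary \ref{kor}) together with $\omega_b=\theta^a\Rho_{ab}$, and read off the second equation of (\ref{sy}), with $\rho$ defined as the appropriate trace term. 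Applying $\D$ once more to that second equation and using the Bianchi identities for $W^a_{~bcd}$ and $Y_{abc}$ from Section 1.3, together with the Ricci identity, yields the third equation of (\ref{sy}). This shows $(g^{ab},\mu^c,\rho)$ solves (\ref{sy}).

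For the ``if'' direction, I would simply observe that if $(g^{ab},\mu^c,\rho)$ solves the full system (\ref{sy}), then in particular its first equation is (\ref{obc1}), which contracted with $\theta^c$ gives $\nabla_cg^{ab}=\mu^a\delta^b_{~c}+\mu^b\delta^a_{~c}$; tracing over $\{bc\}$ forces $\mu^a=\frac{1}{n+1}\nabla_dg^{ad}$, and substituting back recovers precisely (\ref{mai}). The nondegeneracy and symmetry of $g^{ab}$ are carried along as hypotheses, so nothing further is needed here.

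The one point that needs genuine care — and which I expect to be the main obstacle — is verifying that the second and third equations of (\ref{sy}) are not just \emph{consequences} of (\ref{mai}) but that the prolongation \emph{closes}, i.e. that applying $\D$ to the third equation produces no new independent constraint but only an identity modulo (\ref{sy}) itself. In other words, one must check the integrability of the closed system (\ref{sy}) is equivalent to, and not stronger than, solvability of (\ref{mai}); this is the step where the Bianchi identities of Section 1.3 (especially (\ref{bi2}), (\ref{bi3}), and the last identity relating $\nabla Y$ to $\Rho\cdot W$) are essential, and where the special-class assumption $\Omega^a_{~a}=0$ repeatedly simplifies the contractions. Concretely, I would differentiate (\ref{vai}) once more, expand $\D^2$ via (\ref{ri}), and confirm that every term either cancels or reduces to a combination of the right-hand sides already appearing in (\ref{sy}); this is the lengthy but routine verification alluded to by the citation to \cite{ema}. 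Once that closure is checked, the theorem follows from the general fact that a linear first-order system written in closed (Frobenius) prolonged form $\D(\text{jet}) = (\text{algebraic in jet})$ admits a local solution iff the consistency conditions hold, and here those consistency conditions have been shown to be vacuous beyond (\ref{sy}).
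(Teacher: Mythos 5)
Your first two paragraphs reproduce the paper's argument exactly: the ``only if'' direction is the prolongation --- apply $\D$ to (\ref{obc1}), use the Ricci identity (\ref{ri}) to get (\ref{pa})--(\ref{paw}), contract to obtain (\ref{wai}) with $\rho$ entering as a trace term, and differentiate once more to obtain (\ref{vai}) --- while the ``if'' direction simply reads (\ref{mai}) off the first equation of (\ref{sy}) after tracing to identify $\mu^a=\tfrac{1}{n+1}\nabla_dg^{ad}$. That already proves the theorem: the second and third equations of (\ref{sy}) are differential consequences of the first (for these canonical choices of $\mu^a$ and $\rho$), so a solution of (\ref{mai}) extends to a solution of (\ref{sy}) and, trivially, conversely.

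Your third paragraph, however, is both unnecessary and rests on a false expectation. The theorem asserts only the equivalence of solvability of (\ref{mai}) and (\ref{sy}); it does not claim that (\ref{sy}) is a closed Frobenius system whose consistency conditions are vacuous, so no closure argument is needed. Moreover, the closure you propose to verify actually fails: computing $\D^2 g^{ab}$, $\D^2\mu^b$ and $\D^2\rho$ modulo (\ref{sy}) produces genuinely new \emph{algebraic} constraints on $(g^{ab},\mu^a)$ --- precisely the obstructions (\ref{equi}), (\ref{ic1}) and (\ref{i2}) of Propositions \ref{sto1}, \ref{sto3} and \ref{sto4} --- and Remark \ref{bull} (Example 1) shows that even these are not sufficient for (\ref{sy}) to admit a nondegenerate solution. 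So the verification that ``every term either cancels or reduces to the right-hand sides already appearing in (\ref{sy})'' would not have gone through; fortunately the theorem does not require it.
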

Simple obstructions for having solutions to (\ref{sy}) are obtained by
inserting $\D\mu^b$ from (\ref{wai}) into the integrability conditions
(\ref{pa}), or what is the same, into (\ref{paw}). This insertion, after some algebra,
yields the following proposition.
\begin{proposition}\label{sto1}
Equation (\ref{wai}) is compatible with the integrability conditions
(\ref{pa})-(\ref{paw}) only if $g^{ab}$ satisfies the following
\emph{algebraic} equation:
\be
T_{[ed]}^{\quad cb}\phantom{}_{af}g^{af}=0,\label{equi}\ee
where 
\be\begin{aligned}
T_{[ed]}^{\quad cb}\phantom{}_{af}=\tfrac12 \delta^c_{~(a}W^b_{~f)ed}+\tfrac12 \delta^b_{~(a}W^c_{~f)ed}+\tfrac{1}{n}W^c_{~(af)[e}\delta^b_{~d]}+\tfrac{1}{n}W^b_{~(af)[e}\delta^c_{~d]}.
\end{aligned}\label{equiv}\ee  
\end{proposition}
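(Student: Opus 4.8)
The plan is to take the prolonged equation \eqref{wai} for $\D\mu^b$, substitute it into the first integrability condition \eqref{pa} (equivalently its tensorial form \eqref{paw}), and extract the purely algebraic consequence on $g^{ab}$. Concretely, I would proceed as follows. First, compute $\D\mu^c\dz\theta^b$ using \eqref{wai}: the term $\rho\theta^c\dz\theta^b$ is symmetric in $b\leftrightarrow c$ up to sign (it is $-\rho\theta^b\dz\theta^c$), so when we form the symmetric combination $\D\mu^c\dz\theta^b+\D\mu^b\dz\theta^c$ appearing on the right of \eqref{pa}, the $\rho$-contributions cancel. This is the key point that makes the resulting condition $\rho$-free, hence algebraic in $g^{ab}$ alone. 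Similarly, the $\omega_c g^{bc}$ term contributes $-\omega_c g^{bc}\dz\theta^b$, and in the symmetrized sum this produces a combination built from the Schouten form $\omega$; but since we are in a \emph{special} projective class, $\omega_a=\theta^b\Rho_{ba}$ with $\Rho$ symmetric, and one should check that these Schouten contributions match exactly the Schouten part of the left-hand side curvature $\Omega^b_{~a}g^{ac}+\Omega^c_{~a}g^{ba}$, via the decomposition \eqref{c13}. After that cancellation, only the Weyl parts survive on both sides.

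Second, I would write the left-hand side of \eqref{pa} using $\Omega^a_{~b}=W^a_{~b}+\theta^a\dz\omega_b$ (valid in a special class by Corollary \ref{kor} and the proof of Corollary \ref{kor}), so that $\Omega^b_{~a}g^{ac}+\Omega^c_{~a}g^{ba}$ splits into a Weyl piece $W^b_{~a}g^{ac}+W^c_{~a}g^{ba}$ plus Schouten pieces of the form $\theta^b\dz\omega_a g^{ac}+\theta^c\dz\omega_a g^{ba}$. The right-hand side, after the substitution, contributes $-\tfrac1n W^b_{~cda}g^{cd}\theta^e\dz\theta^b\cdots$ type terms — more precisely $-\tfrac1n(W^b_{~cda}g^{cd}\theta^a)\dz\theta^c - \tfrac1n(W^c_{~cda}g^{cd}\theta^a)\dz\theta^b$, written out as 2-forms. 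Matching the Schouten pieces on the two sides is where one uses \eqref{d1} and symmetry of $\Rho$; I expect these to cancel identically given the earlier identities, leaving a pure 2-form identity involving only the Weyl tensor and $g^{ab}$.

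Third, I would convert that surviving 2-form identity into a tensor identity by stripping off the $\theta^e\dz\theta^d$ (i.e. antisymmetrizing in the appropriate pair $[ed]$), collect all terms, and organize them into the shape $T_{[ed]}^{\quad cb}{}_{af}\,g^{af}=0$ with $T$ as displayed in \eqref{equiv}. The symmetrizations $\delta^c_{~(a}W^b_{~f)ed}$ arise from the left-hand Weyl terms $W^b_{~a}g^{ac}+W^c_{~a}g^{ba}$ once one uses $g^{af}=g^{(af)}$ to symmetrize the free contracted index, and the terms $\tfrac1n W^c_{~(af)[e}\delta^b_{~d]}$ arise from the $-\tfrac1n W^b_{~cda}g^{cd}$ substitution after antisymmetrizing in $[ed]$ and symmetrizing in $(af)$. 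The main obstacle is purely bookkeeping: correctly tracking the index placements, the $\tfrac12$ and $\tfrac1n$ factors, and the (anti)symmetrizations so that the residual identity collapses exactly to \eqref{equi}–\eqref{equiv}; in particular one must be careful that \emph{all} non-Weyl (Schouten, Cotton, $\rho$) terms really do cancel, which is where the hypothesis that $\nabla$ lies in a special projective subclass — hence $\Rho_{ab}=\Rho_{(ab)}$ and $\Omega^a_{~a}=0$ — is essential. Once that is verified, defining $T$ by the right-hand side of \eqref{equiv} and reading off the coefficient of $g^{af}$ finishes the proof.
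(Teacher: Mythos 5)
Your proposal follows exactly the paper's route: the paper obtains Proposition \ref{sto1} precisely by inserting $\D\mu^b$ from (\ref{wai}) into (\ref{pa})--(\ref{paw}) and simplifying, and your outline correctly identifies the two key cancellations (the $\rho$ terms drop out of the symmetrized combination, and the Schouten pieces of $\Omega^b_{~a}g^{ac}+\Omega^c_{~a}g^{ba}$ cancel against the $-\omega_c g^{bc}$ contributions in the special class) so that only the Weyl terms survive and assemble into $T_{[ed]}^{\quad cb}{}_{af}g^{af}=0$. Apart from a harmless dummy-index clash in your written-out substitution, the argument is sound and matches the paper's.
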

\begin{remark}\label{lpo}
Note that although the integrability condition (\ref{equi}) was
derived in the special gauge when the connection $\nabla$ was special,
it is gauge inedependent. This is because the condition involves the
\emph{projectively invariant} Weyl tensor, and because it is homogeneous in
$g^{ab}$.   
\end{remark}
For each pair of distinct indices $[ed]$ the tensor $T_{[ed]}^{\quad cb}\phantom{}_{af}$ provides a map
\be
S^2M\ni \kappa^{ab}\stackrel{{\mathcal T}_{[ed]}}{\longrightarrow} \kappa'^{ab}=T_{[ed]}^{\quad
  ab}\phantom{}_{cd}\kappa^{cd}\in S^2M,\label{mct}\ee
which is an endomorphism ${\mathcal T}_{[ed]}$ of the space $S^2M$ of 
symmetric 2-tensors on $M$. It is therefore clear that equation (\ref{equi})
has a \emph{nonzero} solution for $g^{ab}$ only if \emph{each} of these
endomorphisms \emph{is singular}. Therefore we have the following
theorem (see also the last Section in \cite{bde}):
\begin{theorem}\label{sto2}
A neccessary condition for a projective structure $[\nabla]$ to
include a Levi-Civita connection of some metric $g$ is that all the
endomorphisms ${\mathcal T}_{[ed]}:S^2M\to S^2M$, built from its Weyl
tensor, as in
(\ref{equiv}), have nonvanishing
determinants. In dimension $n\geq 3$ this gives in general $\frac{n(n-1)}{2}$
obstructions to metrisability.  
\end{theorem}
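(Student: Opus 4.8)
The plan is to read the theorem off Proposition \ref{sto1} by a short linear-algebra argument, with the determinants entering through the contrapositive. First I would recall, via Theorem \ref{tth} and the Liouville-type corollary that follows it, that $[\nabla]$ includes a Levi-Civita connection precisely when some special connection of the class carries a \emph{nondegenerate} symmetric solution $g^{ab}$ of (\ref{mai}), equivalently a nondegenerate $g^{ab}$ completing to a solution of the prolonged system (\ref{sy}). By Proposition \ref{sto1} any such $g^{ab}$ is forced to obey the algebraic constraint (\ref{equi}), namely ${\mathcal T}_{[ed]}(g)=0$ for every antisymmetric index pair $[ed]$, where ${\mathcal T}_{[ed]}$ is the endomorphism of $S^2M$ defined in (\ref{mct})--(\ref{equiv}). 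Since a nondegenerate $g^{ab}$ is in particular a \emph{nonzero} element of $S^2M$, metrisability forces each ${\mathcal T}_{[ed]}$ to have nontrivial kernel.

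The content of the statement is then the determinantal contrapositive. If, for even a single pair $[ed]$, the determinant $\det{\mathcal T}_{[ed]}$ were to \emph{not vanish}, then ${\mathcal T}_{[ed]}$ would be invertible on $S^2M$, its kernel would reduce to $\{0\}$, and the only solution of (\ref{equi}) would be $g^{ab}=0$; this rules out any nondegenerate $g^{ab}$, and hence any metric in the class. Thus the nonvanishing of any one determinant $\det{\mathcal T}_{[ed]}$ is by itself an obstruction to metrisability: a projective class that admits a Levi-Civita connection cannot have any of these determinants different from zero. The $\frac{n(n-1)}{2}$ quantities $\det{\mathcal T}_{[ed]}$ are therefore exactly the sought obstruction functions, one attached to each pair $[ed]$, and metrisability requires every ${\mathcal T}_{[ed]}$ to be singular.

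Next I would make the condition intrinsic to $[\nabla]$. The calculation behind (\ref{equi}) is carried out in a special gauge, but by Remark \ref{lpo} the defining tensor (\ref{equiv}) is assembled solely from the projectively invariant Weyl tensor and (\ref{equi}) is homogeneous in $g^{ab}$; hence the singular/nonsingular status of each ${\mathcal T}_{[ed]}$, and so each determinant obstruction, is a genuine invariant of the projective structure rather than a feature of the representative. The count then follows from the antisymmetry of $[ed]$: the distinct endomorphisms are indexed by the unordered pairs $\{e,d\}$ with $e\neq d$ in $\{1,\dots,n\}$, of which there are $\binom{n}{2}=\frac{n(n-1)}{2}$, yielding that many obstructions in dimension $n\geq 3$, with the phrase ``in general'' covering their generic independence.

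The step I expect to be delicate is the quantifier structure hidden in Proposition \ref{sto1}. That proposition pins the \emph{same} tensor $g^{ab}$ inside every kernel at once, so metrisability actually demands a \emph{common} nonzero vector in $\bigcap_{[ed]}\Ker{\mathcal T}_{[ed]}$ --- a condition strictly stronger than the separate singularity of each ${\mathcal T}_{[ed]}$. For the theorem as stated I only need the weaker per-pair conclusion, so the care lies in extracting exactly that from the simultaneous constraint (and in recording the sharper common-kernel version as the input to the later metrisability algorithm), together with keeping the Weyl-invariance argument clean enough that the determinants descend to well-defined obstruction functions on $M$.
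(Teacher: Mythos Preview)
Your argument is correct and matches the paper's own reasoning, which is the short paragraph immediately preceding the theorem: equation (\ref{equi}) forces a nonzero $g^{ab}$ into the kernel of every ${\mathcal T}_{[ed]}$, so each endomorphism must be singular, and Remark \ref{lpo} makes this projectively invariant. You have also correctly read through the evident misprint in the statement (``nonvanishing'' should be ``vanishing''; equivalently, the endomorphisms must be \emph{singular}), as confirmed by the paper's checklist in Section \ref{bu}, and your closing observation about the stronger common-kernel condition is a useful addendum that the paper does not make explicit.
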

\begin{remark}
{\bf Puzzle}: Note that here we have $I=\frac{n(n-1)}{2}$ obstructions, wheras
the naive count, as adapted from \cite{bde}, yields 
$I'=\tfrac14(n^4-7n^2-6n+4)$. For $n=3$ we see that we constructed
$I=3$ invariants, wheres $I'$ says that there is only one. Why?
\end{remark}
\begin{remark}
Note that the Remark \ref{lpo} enabled us to use \emph{any} connection
from the projective class, not only the special ones, in this theorem.  
\end{remark} 
Further integrability conditions for (\ref{obc1}) may be obtained by applying $\D$ on both sides
of (\ref{wai}) and (\ref{vai}). Applying it on (\ref{wai}), using again the Ricci identity
(\ref{ri}), after some algebra, we get the following proposition.
\begin{proposition}\label{sto3}
The integrability condition $\D^2\mu^b=\Omega^b_{~a}\mu^a$, for
$(g^{ab},\mu^c,\rho)$ satsifying (\ref{sy}), is equivalent to:
\be S_{[ae]}^{\quad \,\, b}\phantom{}_{cd}g^{cd}=\big(~
\frac{n+4}{2} W^b_{~cae}+W^b_{~[ae]c}~\big)\mu^c,\label{ic1}\ee
where the tensor $S_{[ae]}^{\quad \,\, b}\phantom{}_{cd}$ is given by:
$$S_{[ae]}^{\quad \,\, b}\phantom{}_{cd}=\frac{n-2}{2}Y_{ea(c}\delta^b_{~d)}+\nabla_{(c}W^b_{~d)ea}+W^b_{~(cd)[e;a]}.$$
Here, in the last term, for simplicity of the notation, we have used
the semicolon to denote the covariant derivative, $\nabla_ef=f_{;e}$. 
\end{proposition}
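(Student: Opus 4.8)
The plan is to imitate the computation that produced Proposition~\ref{sto1}, but now starting from \eqref{wai} rather than from \eqref{obc1}. Concretely, I would apply the covariant exterior differential $\D$ to both sides of the prolonged equation \eqref{wai}, i.e. to $\D\mu^b=\rho\theta^b-\omega_cg^{bc}-\tfrac1n W^b_{~cda}g^{cd}\theta^a$, and then use the Ricci identity \eqref{ri} in the form $\D^2\mu^b=\Omega^b_{~a}\dz\mu^a$ to rewrite the left-hand side. Because $\D\theta^a=0$ by the first structure equation \eqref{csm}, the exterior derivative falls only on the coefficients, so the right-hand side becomes $\D\rho\dz\theta^b-\D\omega_c\dz g^{bc}\theta^{??}$\ldots --- more carefully, $\D(\rho\theta^b)=\D\rho\dz\theta^b$, $\D(\omega_c g^{bc})=Y_c g^{bc}+\omega_c\D g^{bc}$ using $\D\omega_c=Y_c$ from \eqref{csm}, and $\D(\tfrac1n W^b_{~cda}g^{cd}\theta^a)$ produces $\tfrac1n(\D W^b_{~cda})g^{cd}\dz\theta^a+\tfrac1n W^b_{~cda}(\D g^{cd})\dz\theta^a$.

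Next I would substitute the three structure relations of the prolongation system \eqref{sy} into whatever $\D$-derivatives of $g^{bc}$, $\mu^c$, $\rho$ appear: namely $\D g^{cd}=\mu^c\theta^d+\mu^d\theta^c$, $\D\rho$ from \eqref{vai}, and for the $\D W^b_{~cda}$ term I would invoke the second Bianchi identity in the simplified form displayed just after \eqref{bi3}, $\nabla_a W^d_{~ebc}+\nabla_c W^d_{~eab}+\nabla_b W^d_{~eca}=\delta^d_{~a}Y_{bce}+\delta^d_{~c}Y_{abe}+\delta^d_{~b}Y_{cae}$, together with \eqref{bi2} to trade divergences of $W$ for the Cotton tensor where needed. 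After collecting everything and stripping off the common factor $\tfrac12\theta^{[?}\theta^{?]}$ (i.e. reading the 2-form identity as a tensor identity antisymmetrized in the appropriate pair of indices, here $[ae]$), the $\rho$-terms should cancel identically --- this is the point of having already prolonged all the way to \eqref{vai} --- leaving an identity that is linear in $g^{cd}$ on one side and linear in $\mu^c$ on the other, which is exactly the asserted form \eqref{ic1}. Matching coefficients then pins down the tensor $S_{[ae]}^{\quad\,\, b}\phantom{}_{cd}=\tfrac{n-2}{2}Y_{ea(c}\delta^b_{~d)}+\nabla_{(c}W^b_{~d)ea}+W^b_{~(cd)[e;a]}$ and the combination $\tfrac{n+4}{2}W^b_{~cae}+W^b_{~[ae]c}$ on the $\mu$ side.

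The main obstacle I expect is purely bookkeeping: correctly tracking the symmetrizations and antisymmetrizations that arise when a $2$-form identity is converted to a tensor identity, and making sure the $\rho$-dependent and $\omega$-dependent pieces cancel rather than survive. In particular the term $W^b_{~(cd)[e;a]}$ in $S$ comes from the Bianchi substitution for $\D W$ and will only emerge with the right index symmetry if one is careful that $\D$ acts on $W^b_{~cda}$ through \emph{all four} lower/upper slots via $\G$, and that the resulting $\nabla_{[a}W^b_{~|cd|e]}$-type object is re-expressed using the cyclic Bianchi identity; a sign error there would corrupt the coefficient $\tfrac{n+4}{2}$. A secondary subtlety is that \eqref{wai} and \eqref{vai} were derived in the \emph{special} gauge (symmetric Schouten tensor, $\Omega^a_{~a}=0$), so throughout the computation I may freely drop $\Rho_{[cd]}$ and use $\theta^a\dz\omega_a=0$; I would flag this, as in Remark~\ref{lpo}, but note that unlike the algebraic obstruction \eqref{equi} the identity \eqref{ic1} is not claimed to be gauge-independent, so no extra argument is needed on that score. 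Once the cancellations are verified the proposition follows by inspection.
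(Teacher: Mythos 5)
Your plan is exactly the paper's (the paper gives no more detail than ``applying $\D$ on \eqref{wai}, using again the Ricci identity \eqref{ri}, after some algebra''): apply $\D$ to \eqref{wai}, use $\D^2\mu^b=\Omega^b_{~a}\mu^a$ with $\Omega^a_{~a}=0$ in the special gauge, substitute $\D g^{ab}$ and $\D\rho$ from \eqref{sy} so that the $\rho$- and $\omega$-terms cancel, and convert $\nabla_{[a}W^b_{~|cd|e]}$ via the Bianchi identity to produce the Cotton terms. The only slip is the sign in the graded Leibniz rule, $\D(\omega_c g^{bc})=Y_cg^{bc}-\omega_c\dz\D g^{bc}$, which falls under the bookkeeping you already flag; otherwise the argument is correct and essentially identical to the paper's.
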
  
\begin{remark}
Note that in dimension $n=2$, where $W^a_{~bcd}\equiv 0$, the 
inetrgrability conditions (\ref{equi}) and (\ref{ic1}) are automatically satisfied.
\end{remark}
The last integrability condition $\D^2\rho=0$ yields:
\begin{proposition}\label{sto4}
The integrability condition $\D^2\rho=0$, for
$(g^{ab},\mu^c,\rho)$, satsifying (\ref{sy}) is equivalent to:
\be
U_{[ab]cd}g^{cd}=-\frac{n+3}{2}Y_{bac}\mu^c,\label{i2}
\ee
where the tensor $U_{[ab](cd)}$ reads:
$$U_{[ab]cd}=\nabla_{[a}Y_{b](cd)}+W^e_{~(cd)[a}\Rho_{b]e}.$$
\end{proposition}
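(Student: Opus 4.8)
The statement to be proved (Proposition \ref{sto4}) asserts that the last integrability condition $\D^2\rho = 0$ for a solution $(g^{ab},\mu^c,\rho)$ of the prolonged system \eqref{sy} is equivalent to the algebraic/differential identity \eqref{i2}. The plan is to imitate, line for line, the derivations already carried out for Propositions \ref{sto1} and \ref{sto3}: start from the third equation of \eqref{sy}, namely $\D\rho = -2\omega_b\mu^b + \tfrac{2}{n}Y_{abc}g^{bc}\theta^a$, and apply the covariant exterior differential $\D$ to both sides. Since $\rho$ is a scalar (a $0$-form built $\glg(n,\bbR)$-equivariantly from the data), the Ricci identity \eqref{ri} gives $\D^2\rho = 0$ identically on the Cartan bundle; so computing $\der$ of the right-hand side and setting it to zero is exactly the content of the integrability condition, with no curvature term on the left.

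The key computational steps, in order, are: first, expand $\D(\omega_b\mu^b) = (\D\omega_b)\mu^b + \omega_b\wedge\D\mu^b$ using the third Cartan structure equation $\D\omega_b = Y_b$ from \eqref{csm} and the second prolongation equation $\D\mu^b = \rho\theta^b - \omega_c g^{bc} - \tfrac1n W^b_{~cda}g^{cd}\theta^a$ from \eqref{wai}. The $\rho\theta^b$ term, contracted against $\omega_b$, produces a term proportional to $\omega_b\wedge\theta^b = \Rho_{cb}\theta^c\wedge\theta^b = \Rho_{[cb]}\theta^c\wedge\theta^b$, which vanishes because we are in the special gauge where $\Rho_{ab}$ is symmetric. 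Second, expand $\D(Y_{abc}g^{bc}\theta^a) = (\D Y_{abc})g^{bc}\wedge\theta^a + Y_{abc}(\D g^{bc})\wedge\theta^a + Y_{abc}g^{bc}\,\D\theta^a$; the last term drops since $\D\theta^a=0$, the middle term is handled by \eqref{obc1} (i.e. $\D g^{bc}=\mu^c\theta^b+\mu^b\theta^c$), and for the first term one needs the Bianchi-type identity relating $\D Y_a$ to a Weyl–Schouten bilinear — precisely the last displayed identity in the Bianchi subsection, $\D Y_a + \omega_b\wedge W^b_{~a}=0$, i.e. $\nabla_{[a}Y_{b]cd}$ expressed via $W^e_{~dcb}\Rho$ terms. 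Third, collect everything, strip off the common $\theta$-factors (using that the relevant two-forms are $\theta^a\wedge\theta^b$ and the indices are already antisymmetrized in $[ab]$), and read off the tensorial identity; the symmetrization $Y_{b](cd)}$ and the symmetric pairing $g^{cd}$ appear because $Y_{abc}$ is antisymmetric in its first two slots, so only the symmetric-in-$(cd)$ part of the surrounding expression survives the contraction with $g^{cd}$.

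The main obstacle will be the bookkeeping in the third step: several structurally similar terms — one coming from $(\D Y_{abc})g^{bc}$ via the Weyl–Schouten Bianchi identity, one from $Y_b\wedge\D\mu^b$ carrying the $-\tfrac1n W^b_{~cda}g^{cd}$ piece, and one from $\omega_b\mu^b$ carrying $\Rho_{ac}g^{bc}$ — will partially cancel, and the surviving combination must be reorganized into exactly $U_{[ab]cd}g^{cd} = \nabla_{[a}Y_{b](cd)}g^{cd} + W^e_{~(cd)[a}\Rho_{b]e}g^{cd}$ on the left and $-\tfrac{n+3}{2}Y_{bac}\mu^c$ on the right. Getting the coefficient $-\tfrac{n+3}{2}$ correct is the delicate point: it arises from combining the $\rho\theta^b$-term contribution (which, although it killed the symmetric part of $\Rho$, still contributes a $\mu$-term via $Y_{abc}\mu^c$ after the $\D g$ substitution), the $(n-2)/2$-type coefficient already seen in Proposition \ref{sto3}, and the $\tfrac1n Y_{abc}g^{bc}$ normalization. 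I would double-check this constant by specializing to a low dimension or to an Einstein background where $Y_{abc}$ and $\mu^a$ simplify. Modulo this arithmetic, the proof is a routine exterior-calculus manipulation built entirely from identities \eqref{ri}, \eqref{csm}, \eqref{obc1}, \eqref{wai}, \eqref{vai}, the symmetry of $\Rho_{ab}$ in the special gauge, and the Weyl–Schouten Bianchi identity $\D Y_a = -\,\omega_b\wedge W^b_{~a}$.
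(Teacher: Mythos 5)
Your proposal follows essentially the same route as the paper, which offers no more detail for Proposition \ref{sto4} than ``apply $\D$ to the last prolonged equation and simplify'': differentiating the third equation of (\ref{sy}), substituting $\D\omega_b=Y_b$, (\ref{wai}), (\ref{obc1}) and the Bianchi identity $\D Y_a+\omega_b\dz W^b_{~a}=0$, and using the symmetry of $\Rho_{ab}$ in the special gauge is exactly the right and complete list of ingredients, and you correctly note that no curvature term arises because $\rho$ is a scalar. The only slip is the sign in the Leibniz rule, $\D(\omega_b\mu^b)=(\D\omega_b)\mu^b-\omega_b\dz\D\mu^b$ rather than $+$, which the final bookkeeping you describe would catch.
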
 
\begin{remark}
For the sufficciency of conditions (\ref{equi}), (\ref{ic1}) and
(\ref{i2}) see Remark \ref{bull}.
\end{remark}

\section{Metrisability of a projective structure check list}\label{bu}
Here, based on Theorems \ref{so1}, \ref{tth}, \ref{sto}, \ref{sto2}
and Propositions \ref{sto1}, \ref{sto3} and \ref{sto4}, we outline a
procedure how to check if a given projective structure contains a
Levi-Civita connection of some metric. The procedure is valid for the
dimension $n\geq 3$.

Given a projective structure $(M,[\nabla])$ on an $n$-dimensional
manifold $M$:
\begin{enumerate}
\item calculate its Weyl tensor $W^a_{~bcd}$ and the corresponding
  operators ${\mathcal T}_{[ed]}$ as in (\ref{mct}). If at least one
  of the determinants $\tau_{ed}=\det({\mathcal T}_{[ed]})$,
  $e<d=1,2,\dots,n$, is not zero the projective structure
  $(M,[\nabla])$ does \emph{not} include any Levi-Civita connection.
\item If all the determinats  $\tau_{ed}$ vanish, find a special
  connection $\nabla^0$ in $[\nabla]$, and restrict to a special
  projective subclass $[\nabla^0]\subset[\nabla]$.
\item Now taking any connection $\nabla$ from $[\nabla^0]$ calculate
  the Weyl, (symmetric) Schouten, and Cotton tensors, and the tensors
  $T_{[ed]}^{\quad cb}\phantom{}_{af}$, $S_{[ae]}^{\quad \,\, b}\phantom{}_{cd}$, $U_{[ab]cd}$ of
  Propositions \ref{sto1}, \ref{sto3} and \ref{sto4}. 
\item\label{bui} Solve the \emph{linear algebraic} equations (\ref{equi}),
  (\ref{ic1}) and (\ref{i2}) for the unknown symmetric tensor $g^{ab}$
  and vector field $\mu^a$. 
\item If these equations \emph{have no solutions}, or the $n\times n$
  symmetric matrix
  $g^{ab}$ \emph{has vanishing determinant}, then $(M,[\nabla])$ does \emph{not} include any Levi-Civita connection.
\item If equations (\ref{equi}), (\ref{ic1}) and (\ref{i2}) admit
  solutions with nondegenerate $g^{ab}$, find the inverse $g_{ab}$ of
  the general solution for $g^{ab}$, and check if equation (\ref{icc})
  is satisfied. If this equation can not be satisfied by restricting
  the free functions in the general solution $g^{ab}$ of equations (\ref{equi}),
  (\ref{ic1}) and (\ref{i2}), then $(M,[\nabla])$ does \emph{not} include any Levi-Civita connection.   
\item \label{buj}In the opposite case restrict the general solution $g^{ab}$ of (\ref{equi}),
  (\ref{ic1}) and (\ref{i2}) to $g^{ab}$s satsifying (\ref{icc}), and
  insert $(g^{ab}, \mu^a)$, with such $g^{ab}$ and the most general $\mu^a$
  solving (\ref{equi}),
  (\ref{ic1}) and (\ref{i2}), in the equations
  (\ref{sy}). 
\item \label{buk}Find the general solution to the equations (\ref{sy}) for
  $(g^{ab},\mu^a,\rho)$, with $(g^{ab},\mu^a)$ from the ansatz
  described in point (\ref{buj}).
\item If the solution for such $(g^{ab},\mu^a,\rho)$ does not
  exist, or the symmetric tensor $g^{ab}$ is degenerate, then
  $(M,[\nabla])$ does \emph{not} include any Levi-Civita connection.
\item \label{bum}Otherwise find the inverse $g_{ab}$ of $g^{ab}$ from the
  solution $(g^{ab},\mu^a,\rho)$, and solve for a function $\phi$ on
  $M$ such that
  $\der\phi=-g_{ab}\mu^a\theta^b$. 
\item The metric $\hat{g}={\rm e}^{2\phi}g_{ab}\theta^a\theta^b$ has
  the Levi-Civita connection $\hat{\nabla}$ which is in the special
  projective class $[\nabla^0]\subset[\nabla]$. 
\end{enumerate}

\section{Three dimensional examples}
{\bf Example 1.}
Here, as the first example, we consider a 3-dimensional
projective structure $(M,[\nabla])$ with the projective class
represented by the connection 1-forms:
\be
\Gamma^a_{~b}=\bma \tfrac12 a\der x-\tfrac14b\der y&-\tfrac14 b\der x
& 0\\
-\tfrac14 a \der y&-\tfrac14 a\der x+\tfrac12 b\der y &0\\
c\der y-\tfrac14 a\der z&c\der x-\tfrac14 b\der z&-\tfrac14
a\der x-\tfrac14b\der y\ema 
\label{bul}\ee
The 3-manifold $M$ is parametrized by $(x,y,z)$, and $a=a(z)$,
$b=b(z)$, $c=c(z)$ are sufficiently smooth real functions of $z$.
In addition we assume that 
$$a\neq 0,\quad\quad b\neq 0,\quad\quad c\neq {\rm const}.$$
It can be checked that this connection is special. More specifically
we have:
$$W^a_{~b}=\bma
-\tfrac12 c'\der xy-\tfrac38a'\der xz+\tfrac14b'\der
yz&\tfrac38b'\der xz&\tfrac18b'\der xy\\
\tfrac38 a'\der yz&\tfrac12 c'\der xy+\tfrac14a'\der xz-\tfrac38b'\der yz&-\tfrac18a'\der
xy\\
-ac\der xy-\tfrac12c'\der yz&bc\der xy-\tfrac12c'\der
xz&\tfrac18a'\der xz+\tfrac18b'\der yz
\ema,
$$
where $(\der x\dz\der y,\der x\dz\der z,\der y\dz\der
z)=(\der xy,\der xz,\der yz)$, and
$$\om_a=\bma -\tfrac{3}{16}a^2\der x+\tfrac{1}{16}(8c'+ab)\der
y-\tfrac18a'\der z,&\tfrac{1}{16}(8c'+ab)\der x-\tfrac{3}{16}b^2\der
y-\tfrac18b'\der z,&-\tfrac18a'\der x-\tfrac18b'\der y
\ema.$$
Having these relations we easily calculate the obstructions 
$\tau_{[ed]}$. These are:
$$\tau_{13}=-\frac{9}{8192}(a')^6,\quad\quad
\tau_{23}=-\frac{9}{8192}(b')^6,$$
and $$\tau_{12}=-\tfrac{3}{128}c^2(c')^2(ba'-ab')^2.$$ 
This shows that $(M,[\nabla])$ may be metrisable only if 
$$a={\rm  const},\quad \quad b={\rm const}.$$
For such $a$ and $b$ all the obstructions $\tau_{[ed]}$ vanish. 
Assuming this we pass to the point (\ref{bui}) of our procedure from
Section \ref{bu}.

It follows that with our assumptions, the general 
solution of equation (\ref{equi}) is:
\be
g^{11}=g^{22}=0,\quad\quad g^{13}=\frac{bc}{c'}g^{12},\quad\quad g^{23}=\frac{ac}{c'}g^{12}.\label{re1}\ee
Inserting this in (\ref{ic1}), shows that its general solution is
given by the above relations for $g^{ab}$ and 
\be
\mu^1=\tfrac{1}{12}\Big(1-\frac{4cc''}{(c')^2}\Big)bg^{12},\quad\quad
\mu^2=\tfrac{1}{12}\Big(1-\frac{4cc''}{(c')^2}\Big)ag^{12}.\label{re2}\ee
The general solution (\ref{re1}), (\ref{re2}) of
(\ref{equi}), (\ref{ic1}) is compatible with
the last integrability condition (\ref{i2}) if and only if 
the function $c=c(z)$ defining our projective
structure $(M,[\nabla])$ satisfies a third order ODE:
\be
c^{(3)}c'c+\Big( (c')^2-2c c''\Big)c''=0.\label{re3}
\ee
If this condition for $c=c(z)$ is satisfied then (\ref{re1}),
(\ref{re2}) is the general solution of (\ref{equi}), (\ref{ic1}) and
(\ref{i2}). Moreover, it follows that the solution (\ref{re1}),
(\ref{re2}) also satisfies (\ref{icc}), and the tensor $g^{ab}$ is
nondegenerate for this solution provided that $g^{12}\neq 0$.  

This means that i) the projective structure $(M,[\nabla])$ with $a\neq
0$, $b\neq 0$, $c\neq {\rm const}$ may include a Levi-Civita
connection only if (\ref{re3}) holds, and ii) if it holds, that the
integrability conditions (\ref{equi}), (\ref{ic1}) and
(\ref{i2}) are all satisfied with the general solution (\ref{re1}),
(\ref{re2}), with $g^{12}\neq 0$. 

We now pass to the point (\ref{buk}) of the procedure from Section
\ref{bu}: assuming that (\ref{re3})
holds, we want to solve (\ref{sy}) for $(g^{ab},\mu^a)$ satisfying (\ref{re1}) and (\ref{re2}).

It follows that the $\{11\}$ component of the first of equations
(\ref{sy}) gives a further restriction on the function $c$. Namely, if
$(g^{ab},\mu^a)$ are as in (\ref{re1}) and (\ref{re2}), then 
$\D g^{11}=2\mu^1\theta^1$ iff $c''c-(c')^2=0$, i.e. iff 
$$c=c_1{\rm e}^{c_2z},~{\rm where}~ c_1,~
c_2~{\rm are~constants~s.t.}~ c_1c_2\neq 0.$$
Luckilly this $c$ satisfies (\ref{re3}). Looking at the next
component, $\{12\}$, 
of the first equation (\ref{sy}), we additionally get $\der g^{12}=-\tfrac12
(a\der x+b\der y)g^{12}$. And now, this is compatible with the
$\{13\}$ component of the first equation (\ref{sy}), if and only if
$b=0$ or $g^{12}=0$. We have to exclude $g^{12}=0$, since in such case
$g^{ab}$ is degenerate. On the other hand $b=0$ contradicts 
our assumptions about the function 
$b$. Thus, according to the procedure from Section \ref{bu}, we
conclude that $(M,[\nabla])$ with the connection represented by
(\ref{bul}) with $ab\neq 0$ and $c\neq {\rm const}$ \emph{never} includes a Levi-Civita connection.
\begin{remark}\label{bull}
Note that this example shows that even if all the integrability
conditions (\ref{equi}), (\ref{ic1}), (\ref{i2}) and (\ref{icc}) are
satisfied the equations (\ref{sy}) may have no solutions with
nondegenrate $g^{ab}$. Thus \emph{conditions} (\ref{equi}), (\ref{ic1}),
(\ref{i2}) \emph{and} (\ref{icc}) \emph{are not sufficient} for the existence of
a Levi-Civita connection in the projective class.     
\end{remark}
{\bf Example 2.}
As a next example we consider the same 3-dimensional manifold $M$ as
above, and equip it with a 
projective structure $[\nabla]$ corresponding to $\Gamma^a_{~b}$ as 
in (\ref{bul}), but now assuming that 
the functions $a=a(z)$ and $b=b(z)$ satisfy 
$$a\equiv 0\quad\quad{\rm and}\quad\quad b\equiv 0.$$
For the further convenience we change the variable $c=c(z)$ to the new
function $h=h(z)\neq 0$ such that $c(z)=h'(z)$.

When running through the procdure of Section \ref{bu}, which enables
us to say if such a structure includes a Levi-Civita connection,
everything goes in the same way as in the previous example, up to
equations (\ref{re2}). Thus applying our procedure of Section \ref{bu}
we get that the general solution to (\ref{equi}) and (\ref{ic1}) is
given by
$$
g^{11}=g^{22}=g^{13}=g^{23}=\mu^1=\mu^2=0.$$
It follows that this general solution to (\ref{equi}) and (\ref{ic1}),
automatically satisfies (\ref{i2}) and (\ref{icc}). 

Now, with $g^{11}=g^{22}=g^{13}=g^{23}=\mu^1=\mu^2=0$, 
the first of equations (\ref{sy}) gives:
$$g^{12}={\rm const},\quad\quad \der g^{33}=2h' g^{12}\der z,\quad\quad
\mu^3=h' g^{12},$$
and the second, in addition, gives:
$$\rho=\tfrac23h''g^{12}.$$
This makes the last of equations (\ref{sy}) automatically satisfied. 

The only differential equation to be solved is $\der g^{33}=2h'
g^{12}\der z$, which after a simple integration yields: 
$$g^{33}=2g^{12}h.$$
Thus we have 
$$g^{ab}=g^{12}\bma 0&1&0\\1&0&0\\0&0&2h\ema, $$
with the inverse
$$g_{ab}=\frac{1}{g^{12}}\bma
0&1&0\\1&0&0\\0&0&\frac{1}{2h}\ema,\quad\quad g^{12}={\rm const}\neq
0,\quad\quad h=h(z)\neq 0.$$
Now, realizing point (\ref{bum}) of the procedure of Section \ref{bu},
we define 
\be
A=-g_{ab}\mu^a\theta^b=-\frac{h'}{2h}\der z=-\tfrac12 \der\log(h).\label{bun}\ee
This means that the potential $\phi=-\tfrac12 \log(h)$, and that the
metric $\hat{g}_{ab}$ whose Levi-Civita connection is in the projective
class of 
\be
\Gamma^a_{~b}= \bma 0&0& 0\\
0&0 &0\\
h'\der y&h'\der x&0\ema,\label{buo}\ee
is given by
$$\hat{g}_{ab}=-\frac{1}{g^{12}}\bma
0&\frac{1}{h}&0\\\frac{1}{h}&0&0\\0&0&\frac{1}{2h^2}\ema,\quad\quad g^{12}={\rm const}\neq
0,\quad\quad h=h(z)\neq 0,$$
or what is the same by:
$$\hat{g}=-\frac{1}{g^{12}h^2}\big(2h\der x\der y+\der z^2),\quad\quad g^{12}={\rm const}\neq
0,\quad\quad h=h(z)\neq 0.$$
It is easy to check that in the coframe
$(\theta^1,\theta^2,\theta^3)=(\der x,\der y,\der z)$, the Levi-Civita
connection 1-forms for the metric $\hat{g}$ as above is given by
$$\hat{\Gamma}^a_{~b}=\bma
-\frac{h'}{2h}\der z&0&-\frac{h'}{2h}\der x\\
0&-\frac{h'}{2h}\der z&-\frac{h'}{2h}\der y\\
h'\der y&h'\der x&-\frac{h'}{2h}\der z
\ema,
$$
which satisfies (\ref{pt}) with $\Gamma^a_{~b}$ given by (\ref{buo})
and $A$ given by (\ref{bun}). 
\begin{remark}
Thus we have shown that the projective structure $[\nabla]$ 
generated by the
connection 1-forms (\ref{buo}) is metrisable, and that modulo
rescalling, $\hat{g}\to {\rm const}\hat{g}$, there is a \emph{unique}
metric, whose Levi-Civita connection is in the projective structure
$[\nabla]$. Note that the metric $\hat{g}$ has \emph{Lorentzian}
signature. 
\end{remark}
{\bf Example 3.} 
Now we continue with the example of a projective
structure defined in Section \ref{sec22} by formula
(\ref{exc1}). Calculating the projective Cotton tensor for this
structure we find that it is \emph{projectively flat} if and only if 
$$c''=0\quad\&\quad 2cb'+3bc'=0\quad\&\quad 2ca'+3ac'=0.$$
This happens when $a'=b'=c'=0$, but also e.g. 
when $c=z$, $b=s_1 z^{-\tfrac32}$ and $a=s_2
z^{-\tfrac32}$, with $s_1$, $s_2$ being constants. If the structure is
\emph{not} projectively flat the most general 
nondegenerate solution to equation (\ref{equi}) is
\be
g^{ab}=\bma-\frac{g^{33}}{c'} a'&g^{12}&0\\g^{12}&-\frac{g^{33}}{c'} b'&0\\0&0&g^{33}
\ema.\label{excel}\ee
It follows that if $c'=0$, projectively non flat structures which are metrisable do not
exist. In formula (\ref{excel}) we recognize (\ref{exce}) with
$f=\frac{g^{33}}{c'} $.  Looking for projectively non flat structures,
we now pass to the equation (\ref{ic1}). With $g^{ab}$ as in
(\ref{excel}) this, in particular, yields 
$$\mu^1=\mu^2=0\quad\&\quad ba'-ab'=0.$$
Thus only the structures satisfying this last equation can be
metrisable. In the following we assume that both $a$ and $b$ are
\emph{not} constant. Then $$b=s_1 a,$$ with $s_a$ a constant. 
This solution satisfies all the other equations (\ref{ic1}) if and only if 
$$\mu^3=\frac{2g^{12}(2cc'a'+a{c'}^2)+g^{33}(a'c''-c'a'')}{6a'c'}.$$
Now, with all these choices equations (\ref{i2}) are also
satisfied. Thus we may pass to the differential equations (\ref{obc1}) for the
remaining undetermined $g^{ab}$. It follows that these equations can be
satsified if and only if $$c=s_2 a$$ with $s_2={\rm const}$. Now, the
remaining equations (\ref{obc1}) are satisfied provided that the
unknown functions $g^{12}$ and $g^{33}$ satisfy:
\be
g^{12}_z=2~\tfrac{s_1}{s_2}~a~g^{33}\quad\&\quad g^{33}_z=2s_2 ~a~g^{12}\label{fni}\ee
and are independent of the variables $x$ and $y$. If $g^{12}$ and
$g^{33}$ solve (\ref{fni}) then 
all the other equations (\ref{sy}) are satisfied if and only if 
$$\rho=s_1~a^2~g^{33}+\tfrac23 ~s_2 ~a' ~g^{12}.$$
The system (\ref{fni}) can be solved explicitly (the solution is not
particularly interesting), showing that also in this
case our procedure defined in Section \ref{bu} leads effectively to
the solution of metrisability problem.


\vspace{0.3truecm}
{\bf Example 4} Our last example goes beyond 3-dimensions. It deals
with the so called (anti)deSitter spaces. 

Let $X^a$ be a \emph{constant} vector, 
and $\eta_{ab}$ be a nondegenerate symmetric $n\times n$ \emph{constant}
matrix. We focus on an example when   
$$\eta_{ab}={\rm diag}(1,\dots,1,-1,\dots,-1),$$
with $p$ `+1's, and $q$ `--1's.

In 
$${\mathcal U}=\{~(x^a)\in\bbR^n ~|~\eta_{cd}X^cx^d)\neq 0~\}$$ 
we consider metrics $\hat{g}$ of the form
\be
\hat{g}=\frac{\eta_{ab}\der x^a\der x^b}{(\eta_{cd}X^cx^d)^2}.\label{ocm}\ee
We analyse these metrics in an orthonormal coframe 
\be
\theta^a=\frac{\der x^a}{\eta_{bc}X^bx^c},\label{oc}\ee
in which 
$$\hat{g}=\eta_{ab}\theta^a\theta^b.$$
In the following we will use a convenient notation such that: 
$$\eta_{fg}X^fX^g=\eta(X,X).$$ 
We call the vector $X$ \emph{timelike} iff $\eta(X,X)>0$, \emph{spacelike} iff
$\eta(X,X)<0$, and \emph{null} iff $\eta(X,X)=0$.
 
It is an easy exercise to find that in the coframe (\ref{oc}) the Levi-Civita
connection 1-forms $\hat{\Gamma}^a_{~b}$ associated with metrics (\ref{ocm})
are:
$$\hat{\Gamma}^a_{~b}~=~\eta_{bd}~(X^a\theta^d-\X^d\theta^a).$$
Thus the Levi-Civita connection curvature, $\hat{\Omega}^a_{~b}=\der
\hat{\Gamma}^a_{~b}+\hat{\Gamma}^a_{~c}\dz\hat{\Gamma}^c_{~b}$, is
given by 
$$\hat{\Omega}^a_{~b}=-\eta(X,X)~\theta^a\dz\theta^d~\eta_{bd}.$$
This, in particular, means that the Levi-Civita curvature tensor, $\hat{R}^a_{~bcd}$, the
Levi-Civita Weyl tensor, $\weyl^a_{~bcd}$,   
and the Ricci tensor $\rlc_{ab}$, look,
respeectively, as:
$$\hat{R}^a_{~bcd}~=~\eta(X,X)~(\eta_{bc}\delta^a_{~d}-\eta_{bd}\delta^a_{~c}),$$
$$\weyl^a_{~bcd}=0,$$
and
$$\rlc_{bd}~=~(1-n)~\eta(X,X)~\eta_{bd}.$$
This proves the following proposition:
\begin{proposition}
The metrics $$\hat{g}=\frac{\eta_{ab}\der x^a\der
  x^b}{(\eta_{cd}X^cx^d)^2}$$ are the metrics of constant
curvature. Their curvature is  
totally determined by their constant Ricci scalar
$\rlc=n(1-n)\eta(X,X)$. It is positive,
vanishing or negative depending on the causal properties of the vector
$X$. Hence if $X$ is spacelike $({\mathcal U},\hat{g})$ is locally
the deSitter space, if  $X$ is timelike $({\mathcal U},\hat{g})$ is locally
the anti-deSitter space, and if $X$ is null $({\mathcal U},\hat{g})$ is flat.
\end{proposition}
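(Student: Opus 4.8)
The plan is to assemble the curvature computation that has been begun just above the statement and then appeal to the classical classification of constant-curvature pseudo-Riemannian manifolds. First I would record the structure equation of the coframe (\ref{oc}). Differentiating $\theta^a=\der x^a/(\eta_{bc}X^bx^c)$ directly (or reading it off from the torsion-free condition $\der\theta^a+\hat\Gamma^a_{~b}\dz\theta^b=0$ together with the connection forms $\hat\Gamma^a_{~b}=\eta_{bd}(X^a\theta^d-X^d\theta^a)$ already displayed above) gives $\der\theta^a=-X_b\,\theta^b\dz\theta^a$, with $X_b:=\eta_{bc}X^c$. It is convenient to abbreviate $\mu:=X_b\theta^b$ and $\theta_b:=\eta_{bc}\theta^c$, so that $\hat\Gamma^a_{~b}=X^a\theta_b-X_b\theta^a$ and $\der\theta^a=-\mu\dz\theta^a$, $\der\theta_b=-\mu\dz\theta_b$.

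Next I would substitute these into $\hat\Omega^a_{~b}=\der\hat\Gamma^a_{~b}+\hat\Gamma^a_{~c}\dz\hat\Gamma^c_{~b}$. This is a short piece of bookkeeping: every resulting term is either linear in $\mu$ or carries the factor $\theta_c\dz\theta^c=\eta_{cd}\theta^d\dz\theta^c=0$, the $\mu$-linear terms coming from $\der\hat\Gamma^a_{~b}$ cancel exactly those coming from $\hat\Gamma^a_{~c}\dz\hat\Gamma^c_{~b}$, and the only survivor is the term proportional to $X_cX^c=\eta_{fg}X^fX^g=\eta(X,X)$. One obtains $\hat\Omega^a_{~b}=-\eta(X,X)\,\eta_{bd}\,\theta^a\dz\theta^d$, exactly as quoted before the statement, hence $\hat R^a_{~bcd}=\eta(X,X)(\eta_{bc}\delta^a_{~d}-\eta_{bd}\delta^a_{~c})$. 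Since $X$ and $\eta_{ab}$ are constant, the coefficient $\eta(X,X)$ is a genuine constant on $\mathcal U$; lowering the first index with $\hat g_{ab}=\eta_{ab}$ rewrites this as $\hat R_{abcd}=-\eta(X,X)(\hat g_{ac}\hat g_{bd}-\hat g_{ad}\hat g_{bc})$, which is precisely the algebraic identity characterising a space of constant sectional curvature $\kappa=-\eta(X,X)$. In particular $\weyl^a_{~bcd}=0$, and the two successive contractions give $\rlc_{bd}=(1-n)\eta(X,X)\eta_{bd}$ and $\rlc=n(1-n)\eta(X,X)$, so the entire Riemann tensor is recovered from the scalar through $\hat R_{abcd}=\tfrac{\rlc}{n(n-1)}(\hat g_{ac}\hat g_{bd}-\hat g_{ad}\hat g_{bc})$.

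For the last part I would note that $n\geq 2$ forces $n(1-n)<0$, so $\sgn\rlc=-\sgn\eta(X,X)$: $X$ spacelike yields $\rlc>0$, $X$ timelike yields $\rlc<0$, and $X$ null yields $\rlc=0$. Finally I would invoke the space-form theorem in arbitrary signature — a connected, simply connected pseudo-Riemannian manifold of signature $(p,q)$ and constant curvature $\kappa$ is unique up to isometry, so any such manifold is locally isometric to the corresponding model, which is the pseudosphere (``deSitter'') for $\kappa>0$, the pseudohyperbolic space (``anti-deSitter'') for $\kappa<0$, and flat space for $\kappa=0$. Restricting to sufficiently small pieces of $\mathcal U$ then gives the three cases of the statement. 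There is no genuine obstacle here: the only things to watch are the sign conventions in passing from $\hat\Omega^a_{~b}$ through $\hat R^a_{~bcd}$ to $\rlc_{ab}$ and $\rlc$, and — if one wants the isometry types asserted literally — the appeal to the space-form uniqueness theorem, which is the sole non-elementary ingredient.
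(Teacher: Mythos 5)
Your proposal is correct and follows essentially the same route as the paper: the paper simply records the connection forms $\hat{\Gamma}^a_{~b}=\eta_{bd}(X^a\theta^d-X^d\theta^a)$, the curvature $\hat{\Omega}^a_{~b}=-\eta(X,X)\,\eta_{bd}\,\theta^a\dz\theta^d$, and the resulting Riemann, Weyl and Ricci tensors, and declares that this proves the proposition; you merely fill in the bookkeeping it calls ``an easy exercise'' and make explicit the appeal to the space-form theorem and the sign conventions (note the paper's convention that $X$ is spacelike iff $\eta(X,X)<0$, which your sign chain implicitly uses correctly).
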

Using this Proposition and Corollary \ref{wni} we see that metrics
(\ref{ocm}) are all projectively equivalent. This fact may have some
relevance in cosmology. We discuss this point in more detail in a
separate paper \cite{nur}.


\begin{thebibliography}{99}
\bibitem{bde} Bryant R L, Dunajski M, Eastwood M (2008) 
Metrisability of two-dimensional projective structures, arXiv:0801.0300 
\bibitem{cas} Casey S, Dunajski M (2010) `Metrisability of path
  geometries', in preparation
\bibitem{car} Cartan E (1924) Sur les varietes a connection projective
  \emph{Bull. Soc. Math. France} {\bf 52} 205-41, 
Cartan E (1955) \emph{Oeuvres} {\bf III 1} 825-62
\bibitem{ea} Eastwood M G (2007) Notes on projective differential
  geometry, in \emph{Symmetries and Overdetermined Systems of Partial 
Differential Equations}, IMA Volumes in Mathematics and its Applications
{\bf 144}, Springer Verlag 2007, pp. 41-60.
\bibitem{ema} Eastwood M G, Matveev V (2007) Metric connections in
  projective differential geometry, in \emph{Symmetries and Overdetermined Systems of Partial 
Differential Equations}, IMA Volumes in Mathematics and its Applications
{\bf 144}, Springer Verlag 2007, pp. 339-350
\bibitem{kob} Kobayashi S (1970) \emph{Transformation Groups 
in Differential Geometry} (Berlin: Springer)
\bibitem{liu} Liouville R (1887) Sur une classe d'equations differentiells, parmi lequelles, in particulier, toutes
celles des lignes geodesiques se trouvent comprises, \emph{Comptes rendus hebdomadaires des seances de
l'Academie des sciences} {\bf 105}, 1062-1064.
\bibitem{mik} J. Mikes (1996) Geodesic mappings of
  affine-connected and Riemannian spaces, \emph{Jour. Math. Sci.} {\bf
    78} 311-333
\bibitem{nn} Newman E T, Nurowski P (2003) Projective connections 
associated with second-order ODEs, \emph{Class. Quantum Grav.} {\bf
  20} 2325-2335
\bibitem{ns} Nurowski P, Sparling G A J (2003) Three-dimensional Cauchy-Riemann structures and
second-order ordinary differential equations, \emph{Class. Quantum
  Grav.} {\bf 20} 4995-5016
\bibitem{nur} Nurowski P (2010) Projectively equivalent Robertson-Walker spacetimes 
\bibitem{sin} Sinjukov N S (1979) \emph{Geodesic mappings of
  Riemannian spaces} (Russian), (Moscow: Nauka)
\bibitem{tom} Thomas T Y (1925) Announcement of a projective theory of 
affinely connected manifolds, \emph{Proc. Nat.
     Acad. Sci.} {\bf 11} 588-589.
\end{thebibliography}
\end{document}